\newcommand{\K}{\textbf{\textit{K}}}
\newcommand{\N}{\mathbb N}
\date{}
\begin{document}
\bibliographystyle{spmpsci}
\title*{All functions $g\colon {\mathbb N} \to {\mathbb N}$ which have
a single-fold Diophantine representation are dominated by a limit-computable
function $f\colon {\mathbb N} \setminus \{0\} \to {\mathbb N}$ which is
implemented in \textsl{MuPAD} and whose computability is an open problem}
\titlerunning{All functions $g\colon {\mathbb N} \to {\mathbb N}$ which have
a single-fold Diophantine representation\ldots}
\author{Apoloniusz Tyszka}
\institute{Apoloniusz Tyszka \at University of Agriculture, Faculty
of Production and Power Engineering, Balicka 116B, 30-149 Krak\'ow, Poland,
\email{rttyszka@cyf-kr.edu.pl}
}
\maketitle

\abstract{Let $E_n=\{x_k=1,~x_i+x_j=x_k,~x_i \cdot x_j=x_k\colon i,j,k \in
\{1,\ldots,n\}\}$. For any integer $n \geq 2214$, we define a system
$T \subseteq E_n$ which has a unique integer solution $(a_1,\ldots,a_n)$.
We prove that the numbers $a_1,\ldots,a_n$ are positive and
$\mathrm{max}\left(a_1,\ldots,a_n\right)>2^{\textstyle 2^n}$. For
a positive integer $n$, let $f(n)$ denote the smallest non-negative
integer $b$ such that for each system $S \subseteq E_n$ with a unique
solution in non-negative integers $x_1,\ldots,x_n$, this solution
belongs to $[0,b]^n$. We prove that if a function $g\colon {\mathbb N}
\to {\mathbb N}$ has a single-fold Diophantine representation, then
$f$ dominates $g$. We present a \textsl{MuPAD} code which takes as input
a positive integer $n$, performs an infinite loop, returns a non-negative
integer on each iteration, and returns $f(n)$ on each sufficiently high
iteration.}

\vfill

\noindent{\bf Key words and phrases:} Davis-Putnam-Robinson-Matiyasevich
theorem, Diophantine equation with a unique integer solution, Diophantine
equation with a unique solution in non-negative integers, limit-computable
function, single-fold Diophantine representation, trial-and-error computable
function.

\vfill

\noindent{\bf 2010 Mathematics Subject Classification:} 03D25, 11U05.

\vspace{3ex}

Let $E_n=\{x_k=1,~x_i+x_j=x_k,~x_i \cdot x_j=x_k\colon i,j,k \in \{1,\ldots,n\}\}$.
The following system
\[
\left\{\begin{array}{rcl}
x_1 			& = & 1\\
x_1+x_1			& = & x_2\\
x_2 \cdot x_2		& = & x_3\\
x_3 \cdot x_3		& = & x_4\\
x_4 \cdot x_4		& = & x_5\\
			& \ldots\\
x_{n-1} \cdot x_{n-1} 	& = & x_n
\end{array}\right.
\]
has a unique complex solution, namely $\left(1,2,4,16,256,\ldots,
2^{\textstyle 2^{n-3}},2^{\textstyle 2^{n-2}}\right)$. The following system
\[
\left\{\begin{array}{rcl}
x_1 + x_1 		& = & x_2\\
x_1 \cdot x_1 		& = & x_2\\
x_2 \cdot x_2 		& = & x_3\\
x_3 \cdot x_3 		& = & x_4\\
			& \ldots\\
x_{n-1} \cdot x_{n-1}	& = & x_n
\end{array}\right.
\]
has exactly two complex solutions, namely:\newline
$(0,\ldots,0)$ and $\left(2,4,16,256,\ldots,2^{\textstyle 2^{n-2}},
2^{\textstyle 2^{n-1}}\right)$.

\begin{theorem}\label{the1}
For each integer $n \geq 2203$, the following system $T$
\[
\left\{\begin{array}{lrcl}
(T_1) & \forall i \in \{1,\ldots,n\} ~x_i \cdot x_i	& = & x_{i+1}\\
(T_2) & x_{n+2} + x_{n+2} 				& = & x_{n+3}\\
(T_3) & x_{n+3} + x_{n+3} 				& = & x_{n+4}\\
(T_4) & x_{n+4} + x_{n+2} 				& = & x_{n+5}\\
(T_5) & x_{n+6} 					& = & 1\\
(T_6) & x_{n+5}+x_{n+6} 				& = & x_{n+7}\\
(T_7) & x_{n+7}+x_{n+6} 				& = & x_{n+8}\\
(T_8) & x_{n+8}+x_{n+6} 				& = & x_1\\
(T_9) & x_{n+8} \cdot x_{n+8} 				& = & x_{n+9}\\
(T_{10}) & x_{n+9} \cdot x_{n+10} 			& = & x_{n+11}\\
(T_{11}) & x_{n+11}+x_1 				& = & x_{2204}
\end{array}\right.
\]
has a unique integer solution $(a_1,\ldots,a_{n+11})$. The numbers $a_1,
\ldots,a_{n+11}$ are positive and $\mathrm{max}\Bigl(a_1,\ldots,
a_{n+11}\Bigr)>2^{\textstyle 2^{n+11}}$.
\end{theorem}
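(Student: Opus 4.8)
The plan is to solve the system from the ``bottom'' upward, expressing every variable as an explicit function of the single free parameter $t := x_{n+2}$, and then to show that the remaining equations force $t$ to one value. First I would read off the affine chain $(T_2)$--$(T_8)$, which yields $x_{n+3}=2t$, $x_{n+4}=4t$, $x_{n+5}=5t$, $x_{n+6}=1$, $x_{n+7}=5t+1$, $x_{n+8}=5t+2$, and finally $x_1=5t+3$. Setting $u := 5t+2 = x_{n+8}$, so that $x_1 = u+1$ and $u \equiv 2 \pmod 5$ for every integer $t$, I would then use $(T_1)$, i.e. $x_{i+1}=x_i^2$ for $i \le n$, to get $x_k = x_1^{2^{k-1}} = (u+1)^{2^{k-1}}$ for $1 \le k \le n+1$. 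Because $n \ge 2203$, the variable $x_{2204}$ exists and equals $(u+1)^{2^{2203}}$. Equation $(T_9)$ gives $x_{n+9}=u^2$, and $(T_{10})$ gives $x_{n+11}=u^2 x_{n+10}$.

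Next I would feed this into $(T_{11})$, obtaining $u^2 x_{n+10} = (u+1)^{2^{2203}} - (u+1)$. Since $u = 5t+2 \ne 0$ for every integer $t$, the value $x_{n+10}$ is determined, and its integrality is equivalent to $u^2 \mid (u+1)^{2^{2203}} - (u+1)$. The crux is to reduce this divisibility to a single condition. Factoring $(u+1)^{2^{2203}} - (u+1) = (u+1)\bigl((u+1)^{m} - 1\bigr)$ with $m := 2^{2203}-1$, noting $\gcd(u+1,u)=1$, writing $(u+1)^m - 1 = u \sum_{j=0}^{m-1}(u+1)^j$, and observing $\sum_{j=0}^{m-1}(u+1)^j \equiv m \pmod u$, one finds that $u^2 \mid (u+1)^{2^{2203}} - (u+1)$ holds if and only if $u \mid 2^{2203}-1$.

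The decisive number-theoretic input is that $2^{2203}-1$ is a Mersenne prime ($2203$ is a Mersenne exponent), so its only integer divisors are $\pm 1$ and $\pm(2^{2203}-1)$. Reducing these four candidates modulo $5$, using $2^{2203} \equiv 3$ hence $2^{2203}-1 \equiv 2 \pmod 5$, and comparing against the forced congruence $u \equiv 2 \pmod 5$, leaves exactly $u = 2^{2203}-1$. This pins down $t = (2^{2203}-3)/5$ and hence every variable, establishing the unique integer solution. Positivity then follows since $x_1 = 2^{2203} > 0$ forces $x_k = x_1^{2^{k-1}} > 0$, the parameters $x_{n+2},\dots,x_{n+9}$ are positive multiples of $t$ or sums thereof, and $x_{n+10} = \bigl((u+1)^{2^{2203}} - (u+1)\bigr)/u^2 > 0$, whence $x_{n+11} > 0$. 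For the size bound I would exhibit $x_{n+1} = x_1^{2^n} = 2^{2203 \cdot 2^n}$ and use $2203 > 2048 = 2^{11}$ to get $2203 \cdot 2^n > 2^{11} \cdot 2^n = 2^{n+11}$, so that $\max(a_1,\dots,a_{n+11}) \ge x_{n+1} > 2^{2^{n+11}}$.

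The main obstacle I expect is the clean reduction of the double divisibility $u^2 \mid (u+1)^{2^{2203}}-(u+1)$ to the single divisibility $u \mid 2^{2203}-1$; once that telescoping/coprimality argument is in place, the primality of the Mersenne number together with the modulo-$5$ sieve forces $u$ almost mechanically, and the final inequality is just the numerical comparison $2203 > 2^{11}$.
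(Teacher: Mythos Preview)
Your proof is correct and follows essentially the same route as the paper: derive $x_{n+8}=5x_{n+2}+2$ from the affine chain, combine $(T_1)$ with $(T_8)$--$(T_{11})$ to force $x_{n+8}\mid 2^{2203}-1$, invoke the primality of the Mersenne number $2^{2203}-1$, and use the congruence $x_{n+8}\equiv 2\pmod 5$ to discard the divisors $\pm 1$ and $-(2^{2203}-1)$; the size bound via $2203>2^{11}$ is identical. The only cosmetic difference is that the paper extracts the divisibility by a direct binomial expansion of $(x_{n+8}+1)^{2^{2203}}$, whereas you factor out $(u+1)$ and use the geometric-series identity $(u+1)^m-1=u\sum_{j=0}^{m-1}(u+1)^j$; both computations yield the same relation $x_{n+8}\mid 2^{2203}-1$.
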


\begin{proof}
Equations $(T_2)$--$(T_7)$ imply that $x_{n+8}=5x_{n+2}+2$. Hence,\newline
$x_{n+8} \not\in \left\{-1,0,1,-2^{2203}+1\right\}$. The system $(T_1)$
implies that $x_{n+1}={x_1}^{\textstyle 2^n}$ and\newline
${x_1}^{\textstyle 2^{2203}}=x_{2204}$. By this and equations $(T_5)$ and
$(T_8)$--$(T_{11})$, we get:
\[
\Bigl(x_{n+8}+1\Bigr)^{\textstyle 2^{2203}}=\Bigl(x_{n+8}+
x_{n+6}\Bigr)^{\textstyle 2^{2203}}={x_1}^{\textstyle 2^{2203}}=
x_{2204}=x_{n+11}+x_1=
\]
\begin{equation}\label{eq1}
\left(x_{n+9} \cdot x_{n+10}\right)+x_1=\left(x_{n+8}^2 \cdot x_{n+10}\right)+
\left(x_{n+8}+x_{n+6}\right)=x_{n+8}^2 \cdot x_{n+10}+x_{n+8}+1
\end{equation}
Next,
\begin{equation}\label{eq2}
\Bigl(x_{n+8}+1\Bigr)^{\textstyle 2^{2203}}=1+2^{2203} \cdot x_{n+8}+x_{n+8}^2
\cdot \sum_{\textstyle k=2}^{\textstyle 2^{2203}} {2^{2203} \choose k}
\cdot x_{n+8}^{k-2}
\end{equation}
\newpage
\noindent
Formulae~(\ref{eq1}) and~(\ref{eq2}) give:
\[
x_{n+8}^2 \cdot \left(x_{n+10} - \sum_{\textstyle k=2}^{\textstyle 2^{2203}}
{2^{2203} \choose k} \cdot x_{n+8}^{k-2}\right)=\Bigl(2^{2203}-1\Bigr)
\cdot x_{n+8}
\]
The number $2^{2203}-1$ is prime (\cite[p. 79 and p. 81]{Ribenboim})
and\newline
$x_{n+8} \not\in \left\{-1,0,1,-2^{2203}+1\right\}$. Hence,
$x_{n+8}=2^{2203}-1$. This proves that exactly one integer tuple
$(x_1,\ldots,x_{n+11})$ solves $T$ and the numbers $x_1,\ldots,x_{n+11}$
are positive. Next, $x_1=x_{n+8}+x_{n+6}=\Bigl(2^{2203}-1\Bigr)+1=2^{2203}$,
and finally
\[
x_{n+1}={x_1}^{\textstyle 2^n}=\left(2^{2203}\right)^{\textstyle 2^n} >
\Bigl(2^{2048}\Bigr)^{\textstyle 2^n}=2^{\textstyle 2^{n+11}}
\]
Explicitly, the whole solution is given by
\[
\left\{\begin{array}{rcl}
\forall i \in \{1,\ldots,n+1\} ~a_i	& = &
		      \left(2^{2203}\right)^{\textstyle 2^{i-1}}\\
a_{n+2} 	& = & \frac{1}{5} \cdot \left(2^{2203}-3\right)\\
a_{n+3} 	& = & \frac{2}{5} \cdot \left(2^{2203}-3\right)\\
a_{n+4} 	& = & \frac{4}{5} \cdot \left(2^{2203}-3\right)\\
a_{n+5} 	& = & 2^{2203}-3\\
a_{n+6} 	& = & 1\\
a_{n+7} 	& = & 2^{2203}-2\\
a_{n+8} 	& = & 2^{2203}-1\\
a_{n+9} 	& = & \left(2^{2203}-1\right)^2\\
a_{n+10} 	& = & 1+\sum_{k=2}^{\textstyle 2^{2203}}\limits
		      {\textstyle 2^{2203} \choose
		      \textstyle k} \cdot \left(2^{2203}-1\right)^{k-2}\\
a_{n+11} 	& = & \left(2^{2203}\right)^{\textstyle 2^{2203}}-2^{2203}
\end{array}\right.
\]
\begin{flushright}
$\qed$
\end{flushright}
\end{proof}

If we replace the equation $(T_5)$ by the system $\forall i \in \{1,\ldots,n+11\}~x_{n+6} \cdot x_i=x_i$,
then the system $T$ contains only equations of the form $x_i+x_j=x_k$ or $x_i \cdot x_j=x_k$,
and exactly two integer tuples solve $T$, namely $(0,\ldots,0)$ and $(a_1,\ldots,a_{n+11})$.
Hence, Theorem~\ref{the1} disproves the conjecture in~\cite{Tyszka1}, where the author proposed
the upper bound~$2^{\textstyle 2^{n-1}}$ for positive integer solutions to any system
\[
S \subseteq \{x_i+x_j=x_k,~x_i \cdot x_j = x_k\colon~i,j,k \in
\{1,\ldots,n\}\}
\]
which has only finitely many solutions in positive integers $x_1,\ldots,x_n$.
Theorem~\ref{the1} disproves the conjecture in~\cite{Tyszka2}, where the
author proposed the upper bound~$2^{\textstyle 2^{n-1}}$ for modulus of
integer solutions to any system $S \subseteq E_n$ which has only finitely many
solutions in integers $x_1,\ldots,x_n$. For each integer $n \geq 2$, the
following system
\[
\left\{\begin{array}{rcl}
\forall i \in \{1,\ldots,n\} ~x_i \cdot x_i	& = & x_{i+1}\\
x_{n+2} 					& = & 1\\
x_{n+2}+x_{n+2} 				& = & x_{n+3}\\
x_{n+3}+x_{n+3} 				& = & x_{n+4}\\
x_{n+4}+x_{n+5} 				& = & x_{n+6}\\
x_{n+6}+x_{n+2} 				& = & x_1\\
x_{n+6} \cdot x_{n+6} 				& = & x_{n+7}\\
x_{n+8}+x_{n+8} 				& = & x_{n+9}\\
x_{n+9}+x_{n+2} 				& = & x_{n+10}\\
x_{n+7} \cdot x_{n+10} 				& = & x_{n+11}\\
x_{n+11}+x_{n+2} 				& = & x_{n+1}
\end{array}\right.
\]
has a unique solution $(a_1,\ldots,a_{n+11})$ in non-negative integers
(\cite{IPL}). The proof of this gives also that $a_{n+1}>2^{\textstyle
2^{(n+11)-2}}$ for any $n \geq 512$ (\cite{IPL}). The above-described result
inspired the author to formulate Theorem~\ref{the1} and the next
Theorem~\ref{the2}.

\begin{theorem}\label{the2}
If $n \in {\mathbb N}$ and $2^n-1$ is prime, then the following system
\[
\left\{\begin{array}{rcl}
\forall i \in \{1,\ldots,n\} ~x_i \cdot x_i &=& x_{i+1} \\
x_{n+2} &=& 1 \\
x_{n+3}+x_{n+2} &=& x_{n+4} \\
x_{n+4}+x_{n+2} &=& x_{n+5} \\
x_{n+5}+x_{n+2} &=& x_1 \\
x_{n+5} \cdot x_{n+5} &=& x_{n+6} \\
x_{n+6} \cdot x_{n+7} &=& x_{n+8} \\
x_{n+8}+x_1 &=& x_{n+1}
\end{array}\right.
\]
has a unique solution $\left(x_1,\ldots,x_{n+8}\right)$ in non-negative
integers and ${\rm max}\left(x_1,\ldots,x_{n+8}\right)=x_{n+1}=\left(2^n\right)^{\textstyle 2^n}$.
\end{theorem}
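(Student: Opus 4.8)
The plan is to mimic the proof of Theorem~\ref{the1}, exploiting the primality of $2^n-1$, but working throughout with non-negative integers, where the argument is actually cleaner. First I would unwind the linear part of the system. The equation $x_{n+2}=1$ together with $x_{n+3}+x_{n+2}=x_{n+4}$, $x_{n+4}+x_{n+2}=x_{n+5}$ and $x_{n+5}+x_{n+2}=x_1$ gives $x_{n+5}=x_{n+3}+2$ and $x_1=x_{n+3}+3$. Since every $x_i$ is a non-negative integer, $x_{n+3}\geq 0$, so $x_{n+5}\geq 2$ and $x_1\geq 3$; in particular $x_{n+5}\neq 0$. The squaring chain $\forall i\in\{1,\ldots,n\}~x_i\cdot x_i=x_{i+1}$ yields $x_{n+1}={x_1}^{\textstyle 2^n}$.

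Next I would form the key identity. Writing $y:=x_{n+5}=x_1-1$, the equations $x_{n+5}\cdot x_{n+5}=x_{n+6}$, $x_{n+6}\cdot x_{n+7}=x_{n+8}$ and $x_{n+8}+x_1=x_{n+1}$ combine with $x_{n+1}={x_1}^{\textstyle 2^n}=(y+1)^{\textstyle 2^n}$ to give
\[
(y+1)^{\textstyle 2^n}=y^2\cdot x_{n+7}+(y+1).
\]
Expanding the left-hand side by the binomial theorem and cancelling the constant term $1$ and the term $2^n y$ against $y$, exactly as in~(\ref{eq1}) and~(\ref{eq2}), I would reduce this to
\[
y^2\left(x_{n+7}-\sum_{k=2}^{\textstyle 2^n}\binom{2^n}{k}y^{k-2}\right)=\bigl(2^n-1\bigr)\cdot y.
\]

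Since $y\geq 2$, I may divide both sides by $y$, obtaining $y\cdot(\text{integer})=2^n-1$. Because $2^n-1$ is prime by hypothesis and $y\geq 2$, the only possibility is $y=2^n-1$, whence $x_1=2^n$ and $x_{n+1}={x_1}^{\textstyle 2^n}=\bigl(2^n\bigr)^{\textstyle 2^n}$. All remaining unknowns are then forced: $x_{n+3}=2^n-3$, $x_{n+4}=2^n-2$, $x_{n+6}=(2^n-1)^2$, $x_{n+7}=1+\sum_{k=2}^{\textstyle 2^n}\binom{2^n}{k}(2^n-1)^{k-2}$, $x_{n+8}=\bigl(2^n\bigr)^{\textstyle 2^n}-2^n$, and $x_{i+1}={x_i}^2$ for $i\le n$. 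I would check that each of these is a non-negative integer and that the tuple satisfies every equation, establishing both existence and uniqueness. Finally, since $x_1=2^n\geq 2$ the squaring chain is strictly increasing, so $x_1<x_2<\cdots<x_{n+1}$, while $x_{n+8}=x_{n+1}-x_1<x_{n+1}$ and each of $x_{n+2},\ldots,x_{n+7}$ divides or is visibly smaller than $x_{n+8}$; hence $\mathrm{max}\left(x_1,\ldots,x_{n+8}\right)=x_{n+1}=\bigl(2^n\bigr)^{\textstyle 2^n}$.

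The main obstacle---though milder here than in Theorem~\ref{the1}---is the divisibility step: I must be sure that $y$ cannot collapse to $0$ or $1$ before invoking primality. In the integer setting of Theorem~\ref{the1} this required explicitly excluding several exceptional values of $x_{n+8}$, whereas here the non-negativity of $x_{n+3}$ forces $y=x_{n+3}+2\geq 2$ for free, so the prime $2^n-1$ must absorb the whole factor $y$. The one remaining point to verify carefully is that the cofactor $x_{n+7}-\sum_{k=2}^{\textstyle 2^n}\binom{2^n}{k}y^{k-2}$ equals $1$, i.e. that $x_{n+7}$ comes out as the non-negative integer in the closed form above, which follows directly once $y=2^n-1$ is substituted.
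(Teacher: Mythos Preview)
Your proposal is correct and is precisely the argument the paper intends: the paper's own proof of Theorem~\ref{the2} consists of the single sentence ``The proof is analogous to that of Theorem~\ref{the1},'' and you have carried out that analogy in full, including the binomial expansion, the divisibility step using the primality of $2^n-1$, and the verification that $x_{n+1}$ is the maximum. The simplification you note---that non-negativity of $x_{n+3}$ forces $y=x_{n+5}\geq 2$ automatically, avoiding the explicit exclusion of exceptional values needed in Theorem~\ref{the1}---is exactly the reason the argument is cleaner here.
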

\begin{proof}
The proof is analogous to that of Theorem~\ref{the1}.
\begin{flushright}
$\qed$
\end{flushright}
\end{proof}
\begin{theorem}\label{the3}
If $n \in {\mathbb N}\setminus\{0\}$ and $2^{\textstyle 2^n}+1$ is prime,
then the following system
\[
\left\{\begin{array}{rcl}
\forall i \in \{1,\ldots,n\} ~x_i \cdot x_i &=& x_{i+1} \\
x_{n+2} &=& 1 \\
x_{1}+x_{n+2} &=& x_{n+3} \\
x_{n+3}+x_{n+2} &=& x_{n+4} \\
x_{n+1}+x_{n+2} &=& x_{n+5} \\
x_{n+4} \cdot x_{n+6} &=& x_{n+5}
\end{array}\right.
\]
has a unique solution $\left(a_1,\ldots,a_{n+6}\right)$ in non-negative
integers. The numbers $a_1,\ldots,a_{n+6}$ are positive and
${\rm max}\left(a_1,\ldots,a_{n+6}\right)=a_{n+5}=\left(2^{\textstyle 2^n}-1\right)^{\textstyle 2^n}+1$.
\end{theorem}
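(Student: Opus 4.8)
The plan is to follow the pattern of the proof of Theorem~\ref{the1}: collapse the entire system to a single divisibility constraint on $x_1$, and then invoke the primality of the Fermat number $2^{\textstyle 2^n}+1$ to pin down $x_1$ uniquely.

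First I would propagate the equations downward from the top. The chain $\forall i\in\{1,\ldots,n\}~x_i\cdot x_i=x_{i+1}$ forces $x_{i+1}=x_1^{\textstyle 2^i}$ for every $i$, and in particular $x_{n+1}=x_1^{\textstyle 2^n}$; over the non-negative integers each squaring step determines its value from the one below, so all of $x_2,\ldots,x_{n+1}$ are fixed by $x_1$. The equation $x_{n+2}=1$ then gives, successively, $x_{n+3}=x_1+1$, $x_{n+4}=x_1+2$, and $x_{n+5}=x_{n+1}+1=x_1^{\textstyle 2^n}+1$. Thus every coordinate except $x_{n+6}$ is a fixed polynomial in $x_1$, and the only remaining equation, $x_{n+4}\cdot x_{n+6}=x_{n+5}$, reads
\[
(x_1+2)\cdot x_{n+6}=x_1^{\textstyle 2^n}+1 .
\]
Since $x_1\geq 0$ gives $x_1+2\geq 2>0$, this admits a non-negative integer $x_{n+6}$ if and only if $(x_1+2)\mid\left(x_1^{\textstyle 2^n}+1\right)$, and in that case $x_{n+6}$ is uniquely determined.

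The crux is to evaluate this divisibility. Working modulo $x_1+2$ we have $x_1\equiv -2$, and because $2^n$ is even (this is where $n\geq 1$ is used) the factorisation of $a^m-b^m$ by $a-b$ applied to $a=x_1$, $b=-2$, $m=2^n$ yields
\[
(x_1+2)\mid\left(x_1^{\textstyle 2^n}-2^{\textstyle 2^n}\right),\qquad\text{hence}\qquad x_1^{\textstyle 2^n}+1\equiv 2^{\textstyle 2^n}+1\pmod{x_1+2}.
\]
Therefore $(x_1+2)\mid\left(x_1^{\textstyle 2^n}+1\right)$ holds exactly when $(x_1+2)\mid\left(2^{\textstyle 2^n}+1\right)$. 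As $2^{\textstyle 2^n}+1$ is prime and $x_1+2\geq 2$, the only possibility is $x_1+2=2^{\textstyle 2^n}+1$, that is $x_1=2^{\textstyle 2^n}-1$. This single value of $x_1$ determines every other coordinate, which proves that the system has a unique solution in non-negative integers.

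Finally I would back-substitute. With $x_1=2^{\textstyle 2^n}-1\geq 3$ every $a_i$ is positive, and $a_{n+5}=a_{n+1}+1=\left(2^{\textstyle 2^n}-1\right)^{\textstyle 2^n}+1$, matching the claim; comparing this with the increasing chain $a_1\leq\cdots\leq a_{n+1}=a_{n+5}-1$ and with the remaining values $a_{n+2}=1$, $a_{n+3}=2^{\textstyle 2^n}$, $a_{n+4}=2^{\textstyle 2^n}+1$, and $a_{n+6}=a_{n+5}/a_{n+4}$ confirms that $a_{n+5}$ is the maximum. The main obstacle is precisely the modular identity $x_1^{\textstyle 2^n}+1\equiv 2^{\textstyle 2^n}+1\pmod{x_1+2}$: everything hinges on reducing the final equation to a divisibility by the Fermat prime, just as Theorem~\ref{the1} reduces to divisibility by the Mersenne prime $2^{2203}-1$.
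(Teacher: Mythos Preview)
Your proof is correct and follows the same route as the paper: reduce the system to the single equation $(x_1+2)\cdot x_{n+6}=x_1^{2^n}+1$, deduce that $x_1+2$ divides $2^{2^n}+1$, and invoke primality to force $x_1=2^{2^n}-1$. The only cosmetic difference is that the paper writes out the binomial expansion of $\bigl((x_1+2)-2\bigr)^{2^n}$ to exhibit the quotient explicitly, whereas you obtain the same congruence via $a-b\mid a^m-b^m$; both arguments rely on $2^n$ being even, which is exactly where the hypothesis $n\geq 1$ enters.
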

\begin{proof}
The system equivalently expresses that $(x_1+2) \cdot x_{n+6}=x_1^{\textstyle 2^n}+1$.
Since
\[
x_1^{\textstyle 2^n}+1=1+\left((x_1+2)-2\right)^{\textstyle 2^n}=
\]
\[
1+2^{\textstyle 2^n}+(x_1+2) \cdot \sum_{\textstyle k=1}^{\textstyle 2^n}
{\textstyle 2^n \choose k} \cdot (x_1+2)^{\textstyle k-1} \cdot (-2)^{\textstyle 2^n-k}
\]
we get
\[
(x_1+2) \cdot x_{n+6}=1+2^{\textstyle 2^n}+(x_1+2) \cdot
\sum_{\textstyle k=1}^{\textstyle 2^n} {{\textstyle 2^n} \choose {\textstyle k}}
\cdot (x_1+2)^{\textstyle k-1} \cdot (-2)^{\textstyle 2^n-k}
\]
Hence, $x_1+2$ divides $1+2^{\textstyle 2^n}$.
Since $x_1+2 \geq 2$ and $1+2^{\textstyle 2^n}$ is prime, we get
$x_1+2=1+2^{\textstyle 2^n}$ and $x_1=2^{\textstyle 2^n}-1$. Next,
$x_{n+1}=x_1^{\textstyle 2^n}=\left(2^{\textstyle 2^n}-1\right)^{\textstyle 2^n}$ and
\[
x_{n+5}=x_{n+1}+x_{n+2}=\left(2^{\textstyle 2^n}-1\right)^{\textstyle 2^n}+1
\]
Explicitly, the whole solution is given by
\[
\left\{\begin{array}{rcl}
\forall i \in \{1,\ldots,n+1\} ~a_i &=& \left(2^{\textstyle 2^n}-1\right)^{\textstyle 2^{i-1}} \\
a_{n+2} &=& 1 \\
a_{n+3} &=& 2^{\textstyle 2^n} \\
a_{n+4} &=& 2^{\textstyle 2^n}+1 \\
a_{n+5} &=& \left(2^{\textstyle 2^n}-1\right)^{\textstyle 2^n}+1 \\
a_{n+6} &=& 1+\sum_{\textstyle k=1}^{\textstyle 2^{n}}\limits
\displaystyle {{\textstyle 2^n} \choose {\textstyle k}} \cdot \left(2^{\textstyle 2^n}+1\right)^{\textstyle k-1}
\cdot (-2)^{\textstyle 2^n-k}
\end{array}\right.
\]
\begin{flushright}
$\qed$
\end{flushright}
\end{proof}
\par
It is conjectured that $2^{\textstyle 2^n}+1$ is prime only for $n \in \{0,1,2,3,4\}$,
although it is still not excluded that $2^{\textstyle 2^n}+1$ is prime for each sufficiently
large positive integer $n$. Unconditionally, for each positive integer $n$, the following system
\[
\left\{\begin{array}{rcl}
\forall i \in \{1,\ldots,n\} ~x_i \cdot x_i &=& x_{i+1} \\
x_{n+2} &=& 1 \\
x_{n+3}+x_{n+2} &=& x_1 \\
x_{n+4}+x_{n+2} &=& x_{n+3} \\
x_{n+4} \cdot x_{n+5} & = & x_{n+1}
\end{array}\right.
\]
has only finitely many integer solutions $\left(x_1,\ldots,x_{n+5}\right)$.
The maximal solution is given by 
\begin{displaymath}
\left\{\begin{array}{rcl}
\forall i \in \{1,\ldots,n+1\} ~x_i &=& \left(2+2^{\textstyle 2^n}\right)^{\textstyle 2^{i-1}} \\
x_{n+2} &=& 1 \\
x_{n+3} &=& 1+2^{\textstyle 2^n} \\
x_{n+4} &=& 2^{\textstyle 2^n} \\
x_{n+5} &=& \left(1+2^{\textstyle 2^n-1}\right)^{\textstyle 2^n}
\end{array}\right.
\end{displaymath}
\par
The Davis-Putnam-Robinson-Matiyasevich theorem states that every recursively
enumerable set {${\cal M} \subseteq {{\mathbb N}}^n$} has a Diophantine
representation, that is
\begin{equation}
(a_1,\ldots,a_n) \in {\cal M} \Longleftrightarrow \exists x_1, \ldots, x_m \in
{\mathbb N} ~~W(a_1,\ldots,a_n,x_1,\ldots,x_m)=0 \tag*{\texttt{(R)}}
\end{equation}
for some polynomial $W$ with integer coefficients, see \cite{Matiyasevich1}.
The polynomial~$W$ can be computed, if we know the Turing machine~$M$ such
that, for all\newline
{$(a_1,\ldots,a_n) \in {{\mathbb N}}^n$}, $M$ halts on {$(a_1,\ldots,a_n)$} if
and only if {$(a_1,\ldots,a_n) \in {\cal M}$}, see \cite{Matiyasevich1}. The
representation~\texttt{(R)} is said to be {single-fold}, if for any
{$a_1,\ldots,a_n \in {\mathbb N}$} the equation
{$W(a_1,\ldots,a_n,x_1,\ldots,x_m)=0$} has at most one solution
{$(x_1,\ldots,x_m) \in {{\mathbb N}}^m$}. Y. Matiyasevich conjectures that
each recursively enumerable set {${\cal M} \subseteq {{\mathbb N}}^n$} has a
{single-fold} Diophantine representation, see {\cite[pp.~341--342]{DMR}},
{\cite[p.~42]{Matiyasevich2}}, {\cite[p.~79]{Matiyasevich3a}}, and \mbox{\cite[p.~745]{Matiyasevich3b}}.\\[1ex]

Let us say that a set {${\cal M} \subseteq {\mathbb N}^n$} has a bounded Diophantine
representation, if there exists a polynomial $W$ with integer coefficients such that
\[
(a_1,\ldots,a_n) \in {\cal M} \Longleftrightarrow
\]
\[
\exists x_1,\ldots,x_m \in \left\{0,\ldots,\mathrm{max}\left(a_1,\ldots,a_n\right)\right\}
~W\left(a_1,\ldots,a_n,x_1,\ldots,x_m\right)=0
\]
Of course, any bounded Diophantine representation is finite-fold
and any subset of ${\mathbb N}$ with a bounded Diophantine
representation is computable. A simple diagonal argument shows that there exists
a computable subset of ${\mathbb N}$ without any bounded Diophantine
representation, see \cite[p.~360]{DMR}. The authors of \cite{DMR}
suggest a possibility that each subset of ${\mathbb N}$ which
has a finite-fold Diophantine representation has also a bounded
Diophantine representation, see \cite[p.~360]{DMR}.

Let $\omega$ denote the least infinite cardinal number, and let $\omega_1$
denote the least uncountable cardinal number. Let
{$\kappa \in \left\{2,3,4,\ldots,\omega,\omega_1\right\}$}. We say that the
representation~$\texttt{(R)}$ is {$\kappa$-fold}, if for any
{$a_1,\ldots,a_n \in {\mathbb N}$} the equation
\mbox{$W\left(a_1,\ldots,a_n,x_1,\ldots,x_m\right)=0$} has less than $\kappa$
solutions {$\left(x_1,\ldots,x_m\right) \in {{\mathbb N}}^m$}.
Of course, $2$-fold Diophantine representations are identical to single-fold
Diophantine representations, $\omega$-fold Diophantine representations
are identical to finite-fold Diophantine representations, and $\omega_1$-fold
Diophantine representations are identical to Diophantine representations.
\newpage
For a positive integer $n$, let {$f_{\textstyle \kappa}(n)$} denote the smallest non-negative
integer $b$ such that for each system {$S \subseteq E_n$} which has a solution
in non-negative integers {$x_1,\ldots,x_n$} and which has less than~$\kappa$
solutions in non-negative integers {$x_1,\ldots,x_n$}, there exists a
solution of~$S$ in non-negative integers not greater than~$b$. For a
positive integer $n$, let {$f(n)$} denote the smallest non-negative integer
$b$ such that for each system {$S \subseteq E_n$} with a unique solution in
non-negative integers {$x_1,\ldots,x_n$}, this solution belongs to
{$[0,b]^n$}. Obviously, {$f=f_2$}, {$f(1)=1$}, and {$f(2)=2$}.
\begin{lemma}\label{lem2}
(\cite{Lagarias}) If {$k \in {\mathbb N}$}, then the equation
{$x^2+1=5^{2k+1} \cdot y^2$} has infinitely many solutions in non-negative
integers. The minimal solution is given by
\[
x=\frac{\left(2+\sqrt{5}\right)^{\textstyle 5^k} +
\left(2-\sqrt{5}\right)^{\textstyle5^{k}}}{2}
\]
\[
y=\frac{\left(2+\sqrt{5}\right)^{\textstyle 5^k} -
\left(2-\sqrt{5}\right)^{\textstyle5^{k}}}{2 \cdot \sqrt{5} \cdot 5^k}
\]
\end{lemma}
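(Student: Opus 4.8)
The plan is to recognize the equation as a negative Pell equation in disguise and then to analyze divisibility by powers of $5$. First I would substitute $Y=5^k y$, which turns $x^2+1=5^{2k+1}y^2$ into $x^2-5Y^2=-1$. The ring $\mathbb{Z}[\sqrt5]$ has fundamental unit $\alpha=2+\sqrt5$ with norm $N(\alpha)=\alpha\bar\alpha=-1$, where $\bar\alpha=2-\sqrt5$. By the classical theory of Pell equations, every solution $(x,Y)$ in non-negative integers of $x^2-5Y^2=\pm1$ satisfies $x+Y\sqrt5=\alpha^n$ for some integer $n\ge 0$; writing $\alpha^n=x_n+Y_n\sqrt5$ one gets $x_n=\frac{\alpha^n+\bar\alpha^n}{2}$ and $Y_n=\frac{\alpha^n-\bar\alpha^n}{2\sqrt5}$, together with $x_n^2-5Y_n^2=N(\alpha)^n=(-1)^n$. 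Hence $x^2-5Y^2=-1$ forces $n$ to be odd. The original equation is solvable precisely when, in addition, $5^k\mid Y_n$, since then $y=Y_n/5^k$ is a non-negative integer. So everything reduces to locating the odd exponents $n$ for which $5^k\mid Y_n$.

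This amounts to computing the $5$-adic valuation $v_5(Y_n)$. Expanding $\alpha^n=(2+\sqrt5)^n$ by the binomial theorem and subtracting the conjugate expansion, the even powers of $\sqrt5$ cancel and one obtains
\[
Y_n=\sum_{i\ge 0}\binom{n}{2i+1}\,2^{\,n-2i-1}\,5^{\,i}.
\]
The term $i=0$ equals $n\cdot 2^{\,n-1}$, whose $5$-adic valuation is exactly $v_5(n)$ because $2$ is a $5$-adic unit. I claim every term with $i\ge 1$ has valuation at least $v_5(n)+1$. Writing $\binom{n}{2i+1}=\frac{n}{2i+1}\binom{n-1}{2i}$ gives $v_5\!\left(\binom{n}{2i+1}\right)\ge v_5(n)-v_5(2i+1)$, so the $i$-th term has valuation at least $i+v_5(n)-v_5(2i+1)$. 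The elementary estimate $v_5(2i+1)\le i-1$, valid for every $i\ge 1$ because $5^{v_5(2i+1)}\le 2i+1\le 5^{\,i-1}$ for $i\ge 2$ while the case $i=1$ is trivial, then makes this at least $v_5(n)+1$. Since a single term attains the minimal valuation, the non-archimedean property yields
\[
v_5(Y_n)=v_5(n).
\]

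With this valuation formula the lemma falls out. We have $5^k\mid Y_n\iff v_5(n)\ge k\iff 5^k\mid n$. Among odd positive integers divisible by the odd number $5^k$, the smallest is $n=5^k$ itself, and more generally $n=(2m+1)\,5^k$ for $m\ge 0$ supplies infinitely many admissible odd exponents; since $x_n$ and $Y_n$ strictly increase with $n$ (as $\alpha>1$ and $|\bar\alpha|<1$), these yield infinitely many non-negative integer solutions with a unique minimal one at $n=5^k$. Substituting $n=5^k$ into the formulas for $x_n$ and $Y_n$ and recalling $y=Y_n/5^k$ produces exactly the stated closed forms for $x$ and $y$.

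I expect the valuation computation to be the crux: the whole argument hinges on establishing $v_5(Y_n)=v_5(n)$, and the delicate point is ruling out accidental extra divisibility coming from the higher binomial terms. An alternative and perhaps cleaner route to the same identity is to invoke the Lifting-the-Exponent machinery for the Lucas sequence $Y_n=U_n(4,-1)$: since $5$ ramifies (it divides the discriminant $P^2-4Q=20$) and $5\nmid P=4$, one obtains $v_5(Y_n)=v_5(n)$ directly, bypassing the binomial bookkeeping.
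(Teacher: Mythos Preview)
Your argument is correct. The substitution $Y=5^{k}y$ reducing the problem to the negative Pell equation $x^{2}-5Y^{2}=-1$, the parametrization of its solutions by odd powers of the unit $2+\sqrt{5}$, and the valuation computation $v_{5}(Y_{n})=v_{5}(n)$ via the binomial expansion are all sound; in particular your estimate $v_{5}(2i+1)\le i-1$ for $i\ge 1$ is exactly what is needed to isolate the $i=0$ term as the one of minimal valuation.

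As for the comparison: the paper does not supply a proof of this lemma at all. It is stated with a citation to Lagarias and used as a black box in the proof of Theorem~\ref{the4}. So you have provided a full self-contained argument where the paper simply quotes the literature. Your approach is essentially the standard one for such problems (reduce to a Pell equation, then control the $p$-adic valuation of the associated Lucas sequence), and your closing remark that the identity $v_{5}(Y_{n})=v_{5}(n)$ also follows from lifting-the-exponent for the Lucas sequence $U_{n}(4,-1)$ is apt: either route works, and both are more than the paper itself offers.

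One small terminological point that does not affect correctness: $2+\sqrt{5}$ is the fundamental unit of the order $\mathbb{Z}[\sqrt{5}]$, not of the full ring of integers $\mathbb{Z}\bigl[\tfrac{1+\sqrt{5}}{2}\bigr]$ of $\mathbb{Q}(\sqrt{5})$; but since you are parametrizing integer solutions of $x^{2}-5Y^{2}=\pm 1$, the order $\mathbb{Z}[\sqrt{5}]$ is precisely the relevant one, and your claim that all non-negative solutions arise as powers of $2+\sqrt{5}$ is the correct Pell-theoretic statement.
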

\begin{theorem}\label{the4}
For each positive integer $n$, the following system
\[
\left\{\begin{array}{rcl}
\forall i \in \{1,\ldots,n\} ~x_i \cdot x_i	& = & x_{i+1}\\
x_1 \cdot x_{n+1} 				& = & x_{n+2}\\
x_{n+3} 					& = & 1\\
x_{n+3}+x_{n+3} 				& = & x_{n+4}\\
x_{n+4}+x_{n+4} 				& = & x_{n+5}\\
x_{n+5}+x_{n+3}					& = & x_1\\
x_{n+6} \cdot x_{n+6} 				& = & x_{n+7}\\
x_{n+8} \cdot x_{n+8} 				& = & x_{n+9}\\
x_{n+9}+x_{n+3} 				& = & x_{n+10}\\
x_{n+2} \cdot x_{n+7} 				& = & x_{n+10}
\end{array}\right.
\]
has infinitely many solutions in non-negative integers $x_1,\ldots,x_{n+10}$.
If an integer tuple {$(x_1,\ldots,x_{n+10})$} solves the system, then
\[
x_{n+10} \geq \left(\frac{\textstyle
\left(2+\sqrt{5}\right)^{\textstyle 5^{2^{\textstyle n-1}}}+
\textstyle \left(2-\sqrt{5}\right)^{\textstyle 5^{\textstyle 2^{n-1}}}}
{\textstyle 2}\right)^2+1
\]
\end{theorem}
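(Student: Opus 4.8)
The plan is to collapse the whole system onto the single Diophantine equation of Lemma~\ref{lem2} by first solving its rigid, deterministic part. First I would use the equations $x_i \cdot x_i = x_{i+1}$ for $i \in \{1,\ldots,n\}$ to obtain $x_{i+1}=x_1^{\textstyle 2^i}$, so that $x_{n+1}=x_1^{\textstyle 2^n}$ and hence $x_{n+2}=x_1 \cdot x_{n+1}=x_1^{\textstyle 2^n+1}$. The block $x_{n+3}=1$, $x_{n+4}=x_{n+3}+x_{n+3}$, $x_{n+5}=x_{n+4}+x_{n+4}$, $x_{n+5}+x_{n+3}=x_1$ then forces $x_{n+3}=1$, $x_{n+4}=2$, $x_{n+5}=4$, and therefore $x_1=5$. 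Consequently $x_{n+2}=5^{\textstyle 2^n+1}$ is a fixed constant, independent of the remaining free variables.

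Next I would read off the remaining equations. From $x_{n+6} \cdot x_{n+6}=x_{n+7}$ and $x_{n+8} \cdot x_{n+8}=x_{n+9}$ we get $x_{n+7}=x_{n+6}^2$ and $x_{n+9}=x_{n+8}^2$, while $x_{n+9}+x_{n+3}=x_{n+10}$ gives $x_{n+10}=x_{n+8}^2+1$ and $x_{n+2} \cdot x_{n+7}=x_{n+10}$ gives $x_{n+10}=5^{\textstyle 2^n+1} \cdot x_{n+6}^2$. Equating the two expressions for $x_{n+10}$ yields the key equation
\[
x_{n+8}^2+1=5^{\textstyle 2^n+1} \cdot x_{n+6}^2.
\]
Writing $k=2^{\textstyle n-1}$, so that $2k+1=2^n+1$, this is exactly the equation $X^2+1=5^{\textstyle 2k+1} \cdot Y^2$ of Lemma~\ref{lem2} with $X=x_{n+8}$, $Y=x_{n+6}$. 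Since every remaining coordinate is either a fixed constant or determined by $x_{n+6}$ and $x_{n+8}$ through the equations just listed, the non-negative integer solutions of the whole system correspond bijectively to the non-negative integer solutions of this single equation.

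For the first assertion I would invoke Lemma~\ref{lem2} with $k=2^{\textstyle n-1}$ (legitimate since $n \geq 1$ gives $k \in {\mathbb N}$): it furnishes infinitely many non-negative integer solutions $(X,Y)$, each of which lifts to a non-negative integer solution of the full system, so the system has infinitely many non-negative integer solutions. For the lower bound, the crucial observation is that the key equation involves $x_{n+8}$ and $x_{n+6}$ only through their squares. Hence for an arbitrary integer solution the pair $\bigl(|x_{n+8}|,|x_{n+6}|\bigr)$ is a non-negative integer solution of $X^2+1=5^{\textstyle 2k+1} \cdot Y^2$; moreover $x_{n+8}=0$ is impossible, as it would force $5^{\textstyle 2^n+1} \cdot x_{n+6}^2=1$. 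Lemma~\ref{lem2} identifies the minimal non-negative solution, whose $X$-coordinate equals $\frac{(2+\sqrt{5})^{\textstyle 5^{2^{\textstyle n-1}}}+(2-\sqrt{5})^{\textstyle 5^{2^{\textstyle n-1}}}}{2}$, so $|x_{n+8}|$ is at least this value and therefore
\[
x_{n+10}=x_{n+8}^2+1=|x_{n+8}|^2+1 \geq \left(\frac{\left(2+\sqrt{5}\right)^{\textstyle 5^{2^{\textstyle n-1}}}+\left(2-\sqrt{5}\right)^{\textstyle 5^{\textstyle 2^{n-1}}}}{2}\right)^2+1,
\]
which is the stated bound.

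The only genuinely delicate point is the bookkeeping: recognizing that the rigid block pins $x_1=5$ so that the exponent $2^n+1$ aligns with $2k+1$ for $k=2^{\textstyle n-1}$, thereby matching Lemma~\ref{lem2} precisely. Once the system is collapsed to that equation, both claims are immediate, and the passage from non-negative to arbitrary integer solutions is handled painlessly by the absolute-value observation, since $x_{n+10}$ depends on $x_{n+8}$ only through $x_{n+8}^2$.
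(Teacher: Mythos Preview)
Your argument is correct and follows exactly the paper's approach: reduce the system to the Pell-type equation $x_{n+8}^2+1=5^{2\cdot 2^{n-1}+1}\cdot x_{n+6}^2$ and invoke Lemma~\ref{lem2}. You simply spell out in full the derivation that the paper compresses into a single line, and you make explicit (via the absolute-value observation) why the lower bound also applies to arbitrary integer solutions, a point the paper leaves implicit.
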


\begin{proof}
If follows from Lemma~\ref{lem2}, because the system equivalently expresses
that $x_{n+10}=x_8^2+1=5^{\textstyle 2 \cdot 2^{n-1}+1} \cdot x_{n+6}^2+1$.
\begin{flushright}
$\qed$
\end{flushright}
\end{proof}
\newpage
Let {\textsl{Rng}} denote the class of all rings $\K$ that extend ${\mathbb Z}$.
\begin{lemma}\label{lem3}
(\cite{Tyszka1}) Let {$D(x_1,\ldots,x_p) \in {\mathbb Z}[x_1,\ldots,x_p]$}.
Assume that {$\mathrm{deg}(D,x_i) \geq 1$} for each {$i \in \{1,\ldots,p\}$}.
We can compute a positive integer {$n>p$} and a system {$T \subseteq E_n$}
which satisfies the following two conditions:\\[2ex]
\verb+Condition 1.+ If {$\K \in \textsl{Rng} \cup
\{{\mathbb N},~{\mathbb N} \setminus \{0\}\}$}, then
\[
\forall \tilde{x}_1,\ldots,\tilde{x}_p \in
\K ~\Bigl(D(\tilde{x}_1,\ldots,\tilde{x}_p)=0 \Longleftrightarrow
\]
\[
\exists \tilde{x}_{p+1},\ldots,\tilde{x}_n \in \K ~(\tilde{x}_1,\ldots,
\tilde{x}_p,\tilde{x}_{p+1},\ldots,\tilde{x}_n) ~solves~ T\Bigr)
\]
\verb+Condition 2.+ If {$\K \in \textsl{Rng} \cup
\{{\mathbb N},~{\mathbb N} \setminus \{0\}\}$}, then for each
{$\tilde{x}_1,\ldots,\tilde{x}_p \in \K$} with\newline
{$D(\tilde{x}_1,\ldots,\tilde{x}_p)=0$}, there exists a unique tuple
{$(\tilde{x}_{p+1},\ldots,\tilde{x}_n) \in {\K}^{n-p}$} such that the tuple
{$(\tilde{x}_1,\ldots,\tilde{x}_p,\tilde{x}_{p+1},\ldots,\tilde{x}_n)$}
{solves $T$}.\\[2ex]
Conditions 1 and 2 imply that for each {$\K \in \textsl{Rng} \cup
\{{\mathbb N},~{\mathbb N} \setminus \{0\}\}$}, the equation
{$D(x_1,\ldots,x_p)=0$} and the {system $T$} have the same number of solutions
{in $\K$}.
\end{lemma}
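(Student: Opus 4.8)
The plan is to reduce the single polynomial equation $D=0$ to a system over the impoverished vocabulary of $E_n$ (whose only atomic equations are $x_k=1$, $x_i+x_j=x_k$, and $x_i\cdot x_j=x_k$) by introducing one fresh variable for every intermediate value occurring in a straight-line computation of $D$, and then to argue that the reduction is single-fold, i.e. that, once $\tilde x_1,\ldots,\tilde x_p$ are fixed, the values of all fresh variables are forced. This forward-determinism is exactly what yields Condition~2, and the resulting bijection between solutions of $D=0$ and solutions of $T$ is what yields the concluding equality of solution counts.

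First I would separate the positive and negative coefficients of $D$, writing $D=D^{+}-D^{-}$ with $D^{+},D^{-}\in{\mathbb N}[x_1,\ldots,x_p]$, and then set $A=D^{+}+1$ and $B=D^{-}+1$. For every $\K\in\textsl{Rng}\cup\{{\mathbb N},{\mathbb N}\setminus\{0\}\}$ one then has $D=0\Longleftrightarrow A=B$. The point of the extra $+1$ on each side is that $A$ and $B$ both acquire a positive constant term, so neither side is the constant $0$, which is not representable in $E_n$; in particular, when $D^{+}$ or $D^{-}$ is empty the corresponding side is simply the constant $1$, available through an equation $x_k=1$.

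Next I would build $A$ and $B$ by straight-line programs using only the three permitted instruction shapes: the constant $1$ comes from an equation $x_k=1$; larger coefficients come from repeated additions $1+1=2,\ 2+1=3,\ldots$; powers and monomials come from repeated multiplications; and the monomials are summed into $A$ and into $B$. Each instruction consumes already-defined variables and defines one new variable $x_{p+t}$, so given $\tilde x_1,\ldots,\tilde x_p$ every new variable is uniquely determined by forward evaluation. Let $a$ and $b$ be the final variables holding $A$ and $B$. To force $A=B$ without a free variable I would adjoin an equation $x_k=1$ for a fresh $w$ together with the constraint $a\cdot w=b$; since $w=1$ this says exactly $a=b$, and it is a pure constraint on already-defined variables, introducing no freedom. (Equivalently one may let the terminal additions of $A$ and of $B$ target a single shared variable.) Taking $n$ to be the total number of variables gives $n>p$, and $T$ is the list of all these equations; since $\deg(D,x_i)\ge 1$, each $x_i$ occurs in some monomial and hence in some equation of $T$.

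Verification of the two conditions is then routine, and this is where the single-fold design pays off. For Condition~1, if a tuple solves $T$ then all straight-line equations hold, so $a=A$ and $b=B$, and $a\cdot w=b$ with $w=1$ forces $A=B$, i.e. $D=0$; conversely, if $D(\tilde x_1,\ldots,\tilde x_p)=0$, forward evaluation supplies values of the fresh variables satisfying every equation, the final constraint holding because $A=B$. For Condition~2, forward evaluation shows those fresh values are the only ones compatible with $\tilde x_1,\ldots,\tilde x_p$, giving the required unique tuple $(\tilde x_{p+1},\ldots,\tilde x_n)$. The main obstacle is not the arithmetic bookkeeping but preserving single-foldness throughout: the naive encodings of equality and of the condition $a=0$ (for instance $a+y=y$) introduce an unconstrained witness $y$ and thereby violate Condition~2, and the purpose of the $+1$-to-both-sides device together with the multiply-by-$1$ constraint is precisely to encode $A=B$ using only forward-definable variables and pure constraints.
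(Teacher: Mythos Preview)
The paper does not supply its own proof of this lemma; it is stated with a citation to \cite{Tyszka1} and used as a black box. So there is nothing in the present paper to compare your argument against.

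That said, your argument is correct and is the canonical one. Writing $D=0$ as $D^{+}+1=D^{-}+1$, building each side by a straight-line program over the instruction set $\{x_k=1,\ x_i+x_j=x_k,\ x_i\cdot x_j=x_k\}$, and noting that forward evaluation pins down every auxiliary variable from $\tilde x_1,\ldots,\tilde x_p$ is exactly how such reductions are done. Your attention to single-foldness is on point: the $+1$ shift keeps both sides nonzero so that the empty side becomes the constant~$1$, and encoding the final equality via $w=1$, $a\cdot w=b$ (or, equivalently, letting the two terminal additions share a target variable) imposes $A=B$ without a free witness. For $\K\in\{\mathbb N,\mathbb N\setminus\{0\}\}$ every intermediate value is a positive integer constant, a monomial in the $x_i$, a product of such with a positive coefficient, or a partial sum of these, hence lies in $\K$; for $\K\in\textsl{Rng}$ there is nothing to check. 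One would expect the proof in \cite{Tyszka1} to follow the same outline.
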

\par
Theorems~\ref{the2} and \ref{the4} provide a heuristic argument that the
function {$f_{\textstyle \omega_1}$} grows much faster than the function~$f$.
The next Theorem~\ref{the5} for $\kappa=\omega_1$ implies that the function~$f_{\textstyle \omega_1}$
is not computable. These facts lead to the conjecture that the function~$f$ is computable.
By this, Theorem~\ref{the5} for $\kappa=2$ is the first step towards disproving
Matiyasevich's conjecture on single-fold Diophantine representations.
\begin{theorem}\label{the5}
If a function {$g\colon {\mathbb N} \to {\mathbb N}$} has a {$\kappa$-fold}
Diophantine representation, then there exists a positive integer~$m$ such that
{$g(n)<f_{\textstyle \kappa}(n)$} for any {$n \geq m$}.
\end{theorem}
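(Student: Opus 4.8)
The plan is to use the $\kappa$-fold Diophantine representation of the graph of $g$ to manufacture, for every sufficiently large $n$, a single system $S_n \subseteq E_n$ that has at least one but fewer than $\kappa$ solutions in non-negative integers and all of whose solutions contain a component equal to $g(n)+1$; this will force $f_{\kappa}(n) \geq g(n)+1 > g(n)$.

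First I would fix a $\kappa$-fold representation of the graph of $g$, that is, a polynomial $W$ with integer coefficients such that
\[
b = g(a) \Longleftrightarrow \exists x_1,\ldots,x_m \in {\mathbb N}~ W(a,b,x_1,\ldots,x_m) = 0,
\]
where for every $(a,b) \in {\mathbb N}^2$ the equation $W(a,b,x_1,\ldots,x_m)=0$ has fewer than $\kappa$ solutions $(x_1,\ldots,x_m) \in {\mathbb N}^m$. After deleting any existential variables that do not occur (the degenerate case of constant $g$ being trivial), so that $\mathrm{deg}(W,\cdot)\geq 1$ in each of its variables, Lemma~\ref{lem3} converts $W=0$ into a system $T \subseteq E_{N_0}$ whose number of solutions over ${\mathbb N}$ equals that of $W=0$ and in which the variables playing the roles of $a$ and $b$ remain accessible. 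Crucially, $N_0$ depends only on $W$, hence only on $g$, and not on the eventual input.

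Next I would append to $T$, using only equations of the forms $x_k=1$, $x_i+x_j=x_k$, and $x_i \cdot x_j = x_k$, three blocks: a block forcing the variable $a$ to equal the constant $n$, built from the binary expansion of $n$ by repeated doubling and selective addition at a cost of $O(\log n)$ auxiliary variables; a block introducing a variable $z$ with $z = b+1$; and finally trivial equations $x_j=1$ padding the variable count up to exactly $n$. Writing $S_n$ for the result, every appended variable is uniquely determined by the ones already present, so the number of solutions of $S_n$ equals the number of tuples $(b,x_1,\ldots,x_m)\in{\mathbb N}^{m+1}$ with $W(n,b,x_1,\ldots,x_m)=0$. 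Since $g$ is a total function, the only value of $b$ admitting witnesses is $b=g(n)$, and for that value the number of witnesses $(x_1,\ldots,x_m)$ is at least one and fewer than $\kappa$. Thus $S_n \subseteq E_n$ has at least one and fewer than $\kappa$ solutions in non-negative integers, and in each of them the component $z$ equals $g(n)+1$. The three blocks consume $N_0 + O(\log n)$ variables, so the padding is possible precisely once $n \geq N_0 + O(\log n)$, i.e. for all $n$ beyond some threshold $m$; for such $n$ the system $S_n$ shows that no bound $b \leq g(n)$ can serve in the definition of $f_{\kappa}(n)$, whence $f_{\kappa}(n) \geq g(n)+1 > g(n)$.

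I expect the only genuine obstacle to be the quantitative bookkeeping that makes the diagonal close: computing $g(n)$ through $W$ costs a number of variables independent of $n$, while pinning the input down to the specific value $n$ costs only $O(\log n)$, and it is exactly this logarithmic overhead — dwarfed by the linear budget of $n$ variables — that lets the whole construction live inside $E_n$ for large $n$. The remaining care is the solution-count argument: one must check that fixing the input collapses the infinitely many solutions of $T$ down to the fewer-than-$\kappa$ witnesses supplied by the $\kappa$-fold hypothesis, and that every appended or padded variable is forced, so that neither the lower bound of one solution nor the upper bound of $\kappa$ is disturbed.
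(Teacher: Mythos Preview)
Your proposal is correct and follows essentially the same strategy as the paper's proof: apply Lemma~\ref{lem3} to the graph of $g$, append equations that pin the input variable to the value $n$, introduce a fresh variable equal to $g(n)+1$, and pad with equations $z_i=1$ until exactly $n$ variables are used. The only cosmetic difference is in how $n$ is encoded: you build it in binary via repeated doubling at cost $O(\log n)$, whereas the paper uses a unary counter $t_1=1$, $t_i+t_1=t_{i+1}$ up to $t_{[n/2]}$, then doubles and corrects for parity, at cost roughly $n/2$ variables---either way the construction fits inside $E_n$ for all large $n$, and the solution-count argument is identical.
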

\begin{proof}
By Lemma~\ref{lem3} for {$\K={\mathbb N}$}, there is an integer {$s \geq 3$}
such that for any non-negative integers~{$x_1,x_2$},
\begin{equation}
(x_1,x_2) \in g \Longleftrightarrow \exists x_3,\ldots,x_s \in {\mathbb N}\ \ 
\Phi(x_1,x_2,x_3,\ldots,x_s),\tag*{\texttt{(E)}}
\end{equation}
where the formula $\Phi(x_1,x_2,x_3,\ldots,x_s)$ is a conjunction of
formulae of the forms \mbox{$x_k=1$}, $x_i+x_j=x_k$, $x_i \cdot x_j=x_k$
$(i,j,k \in \{1,\ldots,s\})$, and for each non-negative integers $x_1,x_2$
less than $\kappa$ tuples {$(x_3,\ldots,x_s) \in {{\mathbb N}}^{s-2}$} satisfy
{$\Phi(x_1,x_2,x_3,\ldots,x_s)$}. Let $[\cdot]$ denote the integer part
function. For each integer {$n \geq 6+2s$},
\[
n-\left[\frac{n}{2}\right]-3-s \geq 6+2s-\left[\frac{6+2s}{2}\right]-3-s \geq
6+2s-\frac{6+2s}{2}-3-s=0
\]
For an integer {$n \geq 6+2s$}, let $S_n$ denote the following system
\[
\left\{
\begin{array}{c}
\textrm{all~equations~occurring~in~}\Phi(x_1,x_2,x_3,\ldots,x_s)\\
n-\left[\frac{n}{2}\right]-3-s \textrm{~~equations~of~the~form~} z_i = 1\\
\begin{array}{rcl}
t_1				& = & 1\\
t_1+t_1 			& = & t_2\\
t_2+t_1 			& = & t_3\\
				& \ldots\\
t_{\left[\frac{n}{2}\right]-1}+t_1	& = & t_{\left[\frac{n}{2}\right]}\\
t_{\left[\frac{n}{2}\right]}+t_{\left[\frac{n}{2}\right]}	& = & w\\
w+y				& = & x_1\\
y+y				& = & y \textrm{~(if~}n\textrm{~is~even)}\\
y				& = & 1 \textrm{~(if~}n\textrm{~is~odd)}\\
x_2+t_1				& = & u
\end{array}
\end{array}
\right.
\]
with $n$ variables. The system $S_n$ has less than $\kappa$ solutions
in~${{\mathbb N}}^n$. By the equivalence~\texttt{(E)}, $S_n$ is satisfiable
over~${\mathbb N}$. If a {$n$-tuple} {$(x_1,x_2,x_3,\ldots,x_s,\ldots,w,y,u)$}
of non-negative
integers solves $S_n$, then by the equivalence~\texttt{(E)},
\[
x_2=g(x_1)=g(w+y)=g\left(2 \cdot \left[\frac{n}{2}\right]+y\right)=g(n)
\]
Therefore, {$u=x_2+t_1=g(n)+1>g(n)$}. This shows that
{$g(n)<f_{\textstyle \kappa}(n)$} for any {$n \geq 6+2s$}.
\begin{flushright}
$\qed$
\end{flushright}
\end{proof}
\par
For \mbox{$\kappa \in \{2,3,4,\ldots,\omega,\omega_1\}$} and \mbox{$n \in \N \setminus \{0\}$},
let $B(\kappa,n)$ denote the set of all polynomials \mbox{$D(x,x_1,\ldots,x_i)$}
with integer coefficients that satisfy the following conditions:
\vskip 0.1truecm
the degree of \mbox{$D(x,x_1,\ldots,x_i)$} is not greater than $n$,
\vskip 0.1truecm
each coefficient of \mbox{$D(x,x_1,\ldots,x_i)$} belongs to $[-n,n]$,
\vskip 0.1truecm
$i \leq n$,
\vskip 0.1truecm
for each \mbox{non-negative} integer $j$, the equation \mbox{$D(j,x_1,\ldots,x_i)=0$}
is soluble in \mbox{non-negative} integers \mbox{$x_1,\ldots,x_i$},
\vskip 0.1truecm
for each \mbox{non-negative} integer $j$, the equation \mbox{$D(j,x_1,\ldots,x_i)=0$}
has less than $\kappa$ solutions in \mbox{non-negative} integers \mbox{$x_1,\ldots,x_i$}.
\vskip 0.3truecm
\par
For \mbox{$\kappa \in \{2,3,4,\ldots,\omega,\omega_1\}$} and $n \in \N \setminus \{0\}$,
let $h_{\textstyle \kappa}(n)$ denote the smallest
non-negative integer~$b$ such that for each polynomial
\mbox{$D(x,x_1,\ldots,x_i) \in B(\kappa,n)$} the equation
\mbox{$D(n,x_1,\ldots,x_i)=0$} has a solution in non-negative integers
\mbox{$x_1,\ldots,x_i$} not greater than $b$.
\newpage
\begin{theorem}\label{new}
If a function {$g\colon \N \to \N$} has a \mbox{$\kappa$-fold} Diophantine representation,
then \mbox{$g(n)<h_{\textstyle \kappa}(n)$} for any sufficiently large integer $n$.
\end{theorem}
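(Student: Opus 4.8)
The plan is to follow the strategy of Theorem~\ref{the5}, but to replace the system $S_n \subseteq E_n$ by a single polynomial equation whose total degree, coefficients, and number of variables are bounded by a constant depending only on $g$. First I would invoke the hypothesis applied to the graph of $g$, that is, to the set $\{(a_1,a_2) \in \N^2 : g(a_1)=a_2\}$: there is a polynomial $W(a_1,a_2,y_1,\ldots,y_m)$ with integer coefficients such that for all $a_1,a_2 \in \N$ we have $g(a_1)=a_2$ if and only if $\exists y_1,\ldots,y_m \in \N~W(a_1,a_2,y_1,\ldots,y_m)=0$, and such that for each pair $(a_1,a_2)$ fewer than $\kappa$ tuples $(y_1,\ldots,y_m) \in \N^m$ satisfy this equation.

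The heart of the argument is the single polynomial
\[
D(x,b,u,y_1,\ldots,y_m)=W(x,b,y_1,\ldots,y_m)^2+(u-b-1)^2,
\]
regarded as a polynomial in the distinguished input variable $x$ and the $i=m+2$ further variables $b,u,y_1,\ldots,y_m$. Over the non-negative integers a sum of two integer squares is zero only when both summands are zero, so for every non-negative integer $j$ the solutions of $D(j,b,u,y_1,\ldots,y_m)=0$ are exactly the tuples with $W(j,b,y_1,\ldots,y_m)=0$ and $u=b+1$. I would then verify the membership $D \in B(\kappa,n)$ for all large $n$. Since $g$ is total, $b=g(j)$, $u=g(j)+1$ together with a witness $(y_1,\ldots,y_m)$ solve $D(j,\ldots)=0$, so this equation is soluble for every $j$. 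Since $g$ is single-valued, $W(j,b,y_1,\ldots,y_m)=0$ forces $b=g(j)$; hence the solutions of $D(j,\ldots)=0$ are precisely the tuples with $b=g(j)$, $u=g(j)+1$, and $(y_1,\ldots,y_m)$ a witness of $W(j,g(j),y_1,\ldots,y_m)=0$, of which there are fewer than $\kappa$. Finally, the total degree of $D$, the maximum modulus of its coefficients, and the number $i=m+2$ of variables depend on $W$ alone, so there is a threshold $N_0$ such that the three numerical constraints defining $B(\kappa,n)$ are met whenever $n \geq N_0$; thus $D \in B(\kappa,n)$ for all $n \geq N_0$.

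The inequality then follows by unwinding the definition of $h_{\textstyle \kappa}$. Fix any integer $n \geq N_0$. Because $D \in B(\kappa,n)$, the equation $D(n,b,u,y_1,\ldots,y_m)=0$ admits a solution in non-negative integers each of whose components is at most $h_{\textstyle \kappa}(n)$. In every solution of this equation one has $u=g(n)+1$, so $g(n)+1 \leq h_{\textstyle \kappa}(n)$, whence $g(n)<h_{\textstyle \kappa}(n)$. As $n \geq N_0$ was arbitrary, this establishes the theorem.

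I expect the one real subtlety, as opposed to the routine bookkeeping of degrees and coefficients, to be the claim that the passage from the two-argument representation $W$ to the one-parameter polynomial $D$ does not raise the number of solutions above $\kappa$. This is precisely where the totality and single-valuedness of $g$ enter: allowing $b$ to be a free variable instead of an external parameter is harmless because, for each fixed input $j$, the value $b=g(j)$ is the only one for which $W(j,b,y_1,\ldots,y_m)=0$ can hold. The auxiliary variable $u$ is included only to upgrade the weak bound $g(n) \leq h_{\textstyle \kappa}(n)$ furnished by $b$ into the strict inequality required by the statement.
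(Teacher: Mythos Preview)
Your proof is correct and follows essentially the same approach as the paper: both square the representing polynomial $W$ and add the square $(u-b-1)^2$ (in the paper's notation, $(x_{i+1}-x_1-1)^2$) to force an auxiliary variable equal to $g(n)+1$, then take $N_0$ to be the maximum of the degree, the largest coefficient modulus, and the number of variables of the resulting polynomial. Your discussion of why the solution count stays below $\kappa$ is slightly more explicit than the paper's, but the construction and the chain of inequalities $g(n)<g(n)+1\le h_{\kappa}(n)$ are identical.
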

\begin{proof}
There exists a polynomial~$W(x,x_1,x_2,\ldots,x_i)$ with integer coefficients such that for
any \mbox{non-negative} integers $x,x_1$,
\[
(x,x_1) \in g \Longleftrightarrow \exists x_2,\ldots,x_i \in \N ~~W(x,x_1,x_2,\ldots,x_i)=0
\]
and the above equivalence defines a \mbox{$\kappa$-fold} Diophantine representation of the function $g$.
Hence, for each \mbox{non-negative} integer $j$, the equation
\[
W^2\left(j,x_1,x_2,\ldots,x_i\right)+\left(x_{i+1}-x_1-1\right)^2=0
\]
is soluble in \mbox{non-negative} integers \mbox{$x_1,x_2,\ldots,x_i,x_{i+1}$} and
the number of solutions is smaller than $\kappa$. Let $d$ denote the degree of the polynomial
\begin{equation}\label{eq3}
W^2\left(x,x_1,x_2,\ldots,x_i\right)+\left(x_{i+1}-x_1-1\right)^2
\end{equation}
and let $m$ denote the maximum of the modulus of the coefficients.
If \mbox{$n \geq {\rm max}(d,m,i+1)$}, then polynomial~(\ref{eq3}) belongs to $B(\kappa,n)$
and each solution \mbox{$\left(x_1,\ldots,x_{i+1}\right) \in {\N}^{i+1}$} to
\[
W^2\left(n,x_1,x_2,\ldots,x_i\right)+\left(x_{i+1}-x_1-1\right)^2=0
\]
satisfies \mbox{$x_1=g(n)$} and \mbox{$x_{i+1}=g(n)+1$}. Therefore,
\[
g(n)<g(n)+1 \leq h_{\textstyle \kappa}(n)
\]
for any integer \mbox{$n \geq {\rm max}(d,m,i+1)$}.
\begin{flushright}
$\qed$
\end{flushright}
\end{proof}
\par
Theorem~\ref{new} and the Davis-Putnam-Robinson-Matiyasevich theorem imply that the function
$h_{\textstyle \omega_1}$ dominates all computable functions.
If \mbox{$\kappa \neq \omega_1$}, then the possibility that the function $h_{\textstyle \kappa}$
is majorized by a computable function is still not excluded.
\vskip 0.2truecm
\par
Let us fix an integer {$\kappa \geq 2$}.

For a positive integer $n$, let $\theta(n)$ denote the smallest non-negative
integer $b$ such that for each system $S \subseteq E_n$ with more than
$\kappa-1$ solutions in non-negative integers $x_1,\ldots,x_n$,
at least two such solutions belong to $[0,b]^n$.

For a positive integer $n$ and for a non-negative integer $m$, let
$\beta(n,m)$ denote the smallest non-negative integer~$b$ such that for
each system $S \subseteq E_n$ which has a solution in integers
$x_1,\ldots,x_n$ from the range of $0$ to $m$ and which has less than
$\kappa$ solutions in integers $x_1,\ldots,x_n$ from the range of $0$ to $m$,
there exists a solution that belongs to $[0,b]^n$. The function
$\beta: \left({\mathbb N} \setminus \{0\}\right) \times
{\mathbb N} \to {\mathbb N}$ is computable.
\newpage
The following equalities
\[
f_{\textstyle \kappa}(n)=\beta(n,\textrm{max}(f_{\textstyle \kappa}(n),
\theta(n)))=\beta(n,\mathrm{max}(f_{\textstyle \kappa}(n),\theta(n))+1)=
\]
\[
\beta(n,\mathrm{max}(f_{\textstyle \kappa}(n),\theta(n))+2)=\beta(n,
\mathrm{max}(f_{\textstyle \kappa}(n),\theta(n))+3)=\ldots
\]
hold for any positive integer $n$. Therefore, there is an algorithm which
takes as input a positive integer $n$, performs an infinite loop, returns
$\beta(n,m-1)$ on the $m$-th iteration, and returns $f_{\textstyle \kappa}(n)$
on each sufficiently high iteration. This proves that the function
$f_{\textstyle \kappa}$ is computable in the limit for any integer
$\kappa \geq 2$.

\begin{theorem}\label{the6}
Let {$\kappa=2$}. We claim that the following \textsl{MuPAD} code implements
an algorithm which takes as input a positive integer $n$, performs an infinite
loop, returns $\beta(n,m-1)$ on the {$m$-th} iteration, and returns $f(n)$ on
each sufficiently high iteration.
\begin{quote}
\begin{verbatim}
input("input the value of n",n):
X:=[0]:
while TRUE do
Y:=combinat::cartesianProduct(X $i=1..n):
W:=combinat::cartesianProduct(X $i=1..n):
for s from 1 to nops(Y) do
for t from 1 to nops(Y) do
m:=0:
for i from 1 to n do
if Y[s][i]=1 and Y[t][i]<>1 then m:=1 end_if:
for j from i to n do
for k from 1 to n do
if Y[s][i]+Y[s][j]=Y[s][k] and Y[t][i]+Y[t][j]<>Y[t][k]
then m:=1 end_if:
if Y[s][i]*Y[s][j]=Y[s][k] and Y[t][i]*Y[t][j]<>Y[t][k]
then m:=1 end_if:
end_for:
end_for:
end_for:
if m=0 and s<>t then
W:=listlib::setDifference(W,[Y[s]]) end_if:
end_for:
end_for:
print(max(max(W[z][u] $u=1..n) $z=1..nops(W))):
X:=append(X,nops(X)):
end_while:
\end{verbatim}
\end{quote}
\end{theorem}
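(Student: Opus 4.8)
The plan is to split the claim into a computational part and a limiting part. The limiting part is immediate from the chain of equalities $f(n)=\beta(n,\max(f(n),\theta(n)))=\beta(n,\max(f(n),\theta(n))+1)=\cdots$ established just before the theorem (specialised to $\kappa=2$, where $f=f_2$): once the computational part is in hand, it shows the $m$-th printed value is $\beta(n,m-1)$, and as soon as $m-1\ge\max(f(n),\theta(n))$ this equals $f(n)$. So essentially all the work is to read off that the code prints $\beta(n,m-1)$ on its $m$-th pass through the \texttt{while} loop.

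First I would trace $X$. It is initialised to $[0]$, and the last statement $X:=\mathrm{append}(X,\mathrm{nops}(X))$ appends the current length, so an easy induction gives $X=[0,1,\ldots,m-1]$ on the $m$-th iteration; hence $Y$ and the initial $W$ both enumerate all tuples in $\{0,\ldots,m-1\}^n$, i.e. all integer points of range $0$ to $m-1$. For such a tuple $a$ write $\mathrm{Sat}(a)\subseteq E_n$ for the set of equations of $E_n$ that $a$ satisfies. Next I would check the inner block: its flag (the code's variable \texttt{m}) is set to $1$ exactly when some equation of $E_n$ --- of one of the forms $x_k=1$, $x_i+x_j=x_k$, $x_i\cdot x_j=x_k$ --- holds for $Y[s]$ but fails for $Y[t]$. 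The restriction $j\ge i$ loses nothing since addition and multiplication are symmetric, and the test on $Y[s][i]=1$ accounts for the equations $x_k=1$. Thus the flag is $0$ precisely when $\mathrm{Sat}(Y[s])\subseteq\mathrm{Sat}(Y[t])$, and the deletion rule removes $Y[s]$ from $W$ if and only if there is a tuple $c\ne Y[s]$ in range with $\mathrm{Sat}(Y[s])\subseteq\mathrm{Sat}(c)$.

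The core step, which I expect to be the main obstacle, is to identify the surviving set: I claim $a$ survives in $W$ if and only if $a$ is the unique in-range solution of some system $S\subseteq E_n$, and the clean way to prove it is via the \emph{canonical} system $\mathrm{Sat}(a)$. If $a$ is the unique in-range solution of some $S$, then $S\subseteq\mathrm{Sat}(a)$ and every other in-range tuple $c$ violates some equation of $S$, so $\mathrm{Sat}(a)\not\subseteq\mathrm{Sat}(c)$ and $a$ survives. Conversely, if $a$ survives then every in-range $c\ne a$ violates some equation of $\mathrm{Sat}(a)$, so $a$ is the unique in-range solution of $S=\mathrm{Sat}(a)$. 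Passing to the maximal satisfied set is exactly what turns ``no other tuple dominates the satisfied set of $a$'' into ``$a$ is someone's unique solution'', and this maximality observation is the only non-routine point.

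Finally, the print statement returns $\max_{a\in W}\max_u a_u$, which by the definition of $\beta$ for $\kappa=2$ (where ``a solution in range together with fewer than $2$ solutions in range'' means exactly one in-range solution) equals $\beta(n,m-1)$. To keep this exact I would record two harmless remarks: $W$ is never empty, since $(0,\ldots,0)$ is the unique in-range solution of $\{x_i+x_i=x_i\colon 1\le i\le n\}\subseteq E_n$ and therefore always survives, so the maximum is well defined; and two distinct tuples with equal satisfied sets are both deleted, which is correct because neither is anyone's unique solution. Combining, the code prints $\beta(n,m-1)$ on the $m$-th iteration, and the limiting part then yields $f(n)$ on every sufficiently high iteration.
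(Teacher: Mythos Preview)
Your proof is correct and follows essentially the same approach as the paper. The paper's proof introduces the notion of a \emph{duplicate} of a tuple $x$---a tuple $y$ satisfying every equation of $E_n$ that $x$ satisfies---which is precisely your relation $\mathrm{Sat}(x)\subseteq\mathrm{Sat}(y)$; it then asserts (without spelling out the argument) that $\beta(n,m)$ is the smallest $b$ with $[0,b]^n$ containing all tuples in $\{0,\ldots,m\}^n$ having no duplicate other than themselves, which is exactly the characterisation you prove via the canonical system $\mathrm{Sat}(a)$. Your write-up is in fact more complete than the paper's: you justify the trace of $X$, the equivalence you call ``the only non-routine point,'' and the non-emptiness of $W$, none of which the paper makes explicit.
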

\begin{proof}
Let us say that a tuple $y=(y_1,\ldots,y_n) \in {{\mathbb N}}^n$ is a
\emph{duplicate} of a tuple\newline
{$x=(x_1,\ldots,x_n) \in {{\mathbb N}}^n$}, if
\[
(\forall i \in \{1,\ldots,n\} ~(x_i=1 \Longrightarrow y_i=1)) ~\wedge
\]
\[
(\forall i,j,k \in \{1,\ldots,n\} ~(x_i+x_j=x_k \Longrightarrow y_i+y_j=y_k))
\ \wedge
\]
\[
(\forall i,j,k \in \{1,\ldots,n\} ~(x_i \cdot x_j=x_k \Longrightarrow y_i
\cdot y_j=y_k))
\]
For a positive integer $n$ and for a non-negative integer $m$, $\beta(n,m)$
equals the smallest non-negative integer $b$ such that the box $[0,b]^n$
contains all tuples\newline
$(x_1,\ldots,x_n) \in \{0,\ldots,m\}^n$ which have no duplicates in
$\{0,\ldots,m\}^n \setminus \{(x_1,\ldots,x_n)\}$.
\begin{flushright}
$\qed$
\end{flushright}
\end{proof}

The proof of Theorem~\ref{the6} effectively shows that the function $f$ is
computable in the limit. Limit-computable functions, also known as
trial-and-error computable functions, have been thoroughly studied, see
\cite[pp.~233--235]{Soare} for the main results. The function
$f_{\textstyle \omega_1}$ is also computable in the limit (\cite{Tyszka4})
and the following \textsl{MuPAD} code
\begin{quote}
\begin{verbatim}
input("input the value of n",n):
X:=[0]:
while TRUE do
Y:=combinat::cartesianProduct(X $i=1..n):
W:=combinat::cartesianProduct(X $i=1..n):
for s from 1 to nops(Y) do
for t from 1 to nops(Y) do
m:=0:
for i from 1 to n do
if Y[s][i]=1 and Y[t][i]<>1 then m:=1 end_if:
for j from i to n do
for k from 1 to n do
if Y[s][i]+Y[s][j]=Y[s][k] and Y[t][i]+Y[t][j]<>Y[t][k]
then m:=1 end_if:
if Y[s][i]*Y[s][j]=Y[s][k] and Y[t][i]*Y[t][j]<>Y[t][k]
then m:=1 end_if:
end_for:
end_for:
end_for:
if m=0 and max(Y[t][i] $i=1..n)<max(Y[s][i] $i=1..n)
then W:=listlib::setDifference(W,[Y[s]]) end_if:
end_for:
end_for:
print(max(max(W[z][u] $u=1..n) $z=1..nops(W))):
X:=append(X,nops(X)):
end_while:
\end{verbatim}
\end{quote}
performs an infinite computation of $f_{\textstyle \omega_1}(n)$. Flowchart 1
describes an algorithm which computes $f_{\textstyle \kappa}(n)$ in the limit
for any {$\kappa \in \{\omega_1\} \cup \{2,3,4,\ldots\}$}.
\begin{figure}[H]
\centering
\begin{tikzpicture}[thick]
\ttfamily
\path (0,7.4) node {Start};
\path (0,6.6) node {Set $\kappa=\omega_1$ or input an integer
      $\kappa \geq 2$};
\path (0,5.8) node {Input a positive integer $n$};
\path (0,5) node {$m := 0$};
\path (0,3.8) node {Create a list $\cal{L}$ of all systems
      $S \subseteq E_{n}$};
\path (0,3.4) node {which have a solution in $\{0,\ldots,m\}^{n}$};
\path (0,2.6) node {If $\kappa \neq \omega_1$, then remove from
      ${\cal L}$ all systems which};
\path (0,2.2) node {have more than $\kappa - 1$ solutions in
      $\{0,\ldots,m\}^n$};
\path (0,1.4) node {Print the smallest non-negative};
\path (0,1) node {integer $b$ such that each element};
\path (0,.6) node {of $\cal{L}$ has a solution in $\{0,\ldots,b\}^{n}$};
\path (0,-.2) node {$m := m + 1$};
\draw (0,7.4) ellipse (.75cm and 0.2cm);
\draw (-3.5,6.4) -- (-3,6.8) -- (3.5,6.8) -- (3,6.4) -- (-3.5,6.4);
\draw (-2.8,5.6) -- (-2.3,6) -- (2.8,6) -- (2.3,5.6) -- (-2.8,5.6);
\draw (-.55,4.8) rectangle (.55,5.2);
\draw (-3.4,3.2) rectangle (3.4,4);
\draw (-4.3,2) rectangle (4.3,2.8);
\draw (-4.3,.4) -- (-2.8,1.6) -- (4.3,1.6) -- (2.8,.4) -- (-4.3,.4);
\draw (-.85,0) rectangle (.85,-.4);
\draw (.85,-.2) -- (5.25,-.2) -- (5.25,4.4) -- (5.15,4.4);
\draw[->] (5.25,4.4) -- (0,4.4);
\draw[->] (0,7.2) -- (0,6.8);
\draw[->] (0,6.4) -- (0,6);
\draw[->] (0,5.6) -- (0,5.2);
\draw[->] (0,4.8) -- (0,4);
\draw[->] (0,3.2) -- (0,2.8);
\draw[->] (0,2) -- (0,1.6);
\draw[->] (0,.4) -- (0,0);
\end{tikzpicture}\\[2ex]
\textbf{Flowchart~1:} An infinite computation of $f_{\textstyle \kappa}(n)$,
where $\kappa \in \{\omega_1\} \cup \{2,3,4,\ldots\}$
\end{figure}

\textsl{MuPAD} is a computer algebra system whose syntax is modelled on
\textsl{Pascal}. The commercial version of \textsl{MuPAD} is no longer
available as a {stand-alone} product, but only as the \textsl{Symbolic Math
Toolbox} of \textsl{MATLAB}. Fortunately, all presented codes can be executed
by \textsl{MuPAD Light}, which was and is free, see \cite{Tyszka3}.
\begin{theorem}\label{the7}
(\cite{Tyszka4}) Let {$\kappa \in \{2,3,4,\ldots,\omega\}$}. Let us consider
the following three statements:\\[1ex]
$(a)$ There exists an algorithm~${\cal A}$ whose execution always terminates
and which takes as input a Diophantine
equation~$D$ and returns the answer \texttt{YES} or \texttt{NO} which
indicates whether or not the equation~$D$ has a solution in non-negative
integers, if the solution set $Sol(D)$ satisfies
$\textrm{card}(Sol(D))<\kappa$.\\[1ex]
$(b)$ The function $f_{\textstyle \kappa}$ is majorized by a computable
function.\\[1ex]
$(c)$ If a set ${\cal M} \subseteq {{\mathbb N}}^n$ has a $\kappa$-fold
Diophantine representation, then ${\cal M}$ is computable.\\[1ex]
We claim that $(a)$ is equivalent to $(b)$ and $(a)$ implies $(c)$.
\end{theorem}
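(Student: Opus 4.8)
The plan is to prove $(a) \Leftrightarrow (b)$ by establishing the two implications separately, and then to derive $(a) \Rightarrow (c)$ directly. Throughout I will move freely between a system $S \subseteq E_n$ and the single Diophantine equation obtained as the sum of the squares of the differences of the two sides of its equations; over $\mathbb{N}$ these two objects have identical solution sets, so the algorithm ${\cal A}$ from $(a)$ can be applied to any such $S$. I will also invoke Lemma~\ref{lem3} with $\K=\mathbb{N}$ to pass from an arbitrary Diophantine equation to a system in some $E_n$ having the same number of non-negative integer solutions.

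For $(b) \Rightarrow (a)$, let $F$ be a computable function with $f_{\textstyle \kappa}(n) \leq F(n)$ for all $n$. Given a Diophantine equation $D$ with $\mathrm{card}(Sol(D))<\kappa$, I would first apply Lemma~\ref{lem3} to compute a positive integer $n$ and a system $T \subseteq E_n$ whose number of solutions over $\mathbb{N}$ equals that of $D$; in particular $T$ has fewer than $\kappa$ solutions. If $D$ is solvable, then $T$ is a solvable system with fewer than $\kappa$ solutions, so by the definition of $f_{\textstyle \kappa}$ it has a solution in $[0,f_{\textstyle \kappa}(n)]^n \subseteq [0,F(n)]^n$; if $D$ is unsolvable, then $T$ has no solution at all. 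Hence the terminating procedure ``search the finite box $[0,F(n)]^n$ and answer \texttt{YES} if and only if a solution of $T$ is found'' decides solvability correctly whenever $\mathrm{card}(Sol(D))<\kappa$, which is exactly statement $(a)$. The degree hypothesis of Lemma~\ref{lem3} is met by applying the lemma only to the variables that actually occur in $D$.

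For $(a) \Rightarrow (b)$ I would, for each fixed $n$, run ${\cal A}$ on every system $S \subseteq E_n$ (there are finitely many), and for each $S$ with ${\cal A}(S)=\texttt{YES}$ search for the least $b_S$ admitting a solution of $S$ in $[0,b_S]^n$, finally outputting the maximum $G(n)$ of these $b_S$. The hard part is that ${\cal A}$ is guaranteed correct only on equations with fewer than $\kappa$ solutions, so its verdicts on systems with at least $\kappa$ solutions are unreliable, and a priori I cannot tell which systems qualify for $f_{\textstyle \kappa}$. The resolution is that I only need a computable \emph{upper bound} for $f_{\textstyle \kappa}$, and that a spurious \texttt{YES} can occur only for a \emph{solvable} system: if $S$ is unsolvable then $\mathrm{card}(Sol(S))=0<\kappa$, so ${\cal A}$ is correct and returns \texttt{NO} (this is where $\kappa \geq 2$ is used). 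Thus whenever ${\cal A}(S)=\texttt{YES}$ the system is solvable, the bounded search for $b_S$ terminates, and $G$ is computable. Moreover every genuinely qualifying $S$ (solvable, with fewer than $\kappa$ solutions) is correctly detected, since $\mathrm{card}(Sol(S))<\kappa$ forces the correct answer \texttt{YES}; as $f_{\textstyle \kappa}(n)$ is the maximum of $b_S$ over the qualifying systems, all of which satisfy ${\cal A}(S)=\texttt{YES}$, we obtain $f_{\textstyle \kappa}(n) \leq G(n)$. Therefore $G$ majorizes $f_{\textstyle \kappa}$, which is $(b)$.

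Finally, $(a) \Rightarrow (c)$ is immediate. Given a set ${\cal M} \subseteq \mathbb{N}^n$ with a $\kappa$-fold representation $\exists x_1,\ldots,x_m\ W(a_1,\ldots,a_n,x_1,\ldots,x_m)=0$, to decide whether $(a_1,\ldots,a_n) \in {\cal M}$ I would substitute the fixed constants $a_1,\ldots,a_n$ and feed the resulting polynomial in $x_1,\ldots,x_m$ to ${\cal A}$. By the $\kappa$-fold property this equation has fewer than $\kappa$ solutions, so ${\cal A}$ correctly reports whether it is solvable, that is, whether $(a_1,\ldots,a_n) \in {\cal M}$; hence ${\cal M}$ is computable.
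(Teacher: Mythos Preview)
Your proof is correct and follows essentially the same route as the paper: the same conversion between systems $S\subseteq E_n$ and single Diophantine equations, the same use of Lemma~\ref{lem3} for $(b)\Rightarrow(a)$, and the same construction of a computable majorant for $(a)\Rightarrow(b)$ via the key observation that an unsolvable system has $0<\kappa$ solutions, so $\mathcal{A}(S)=\texttt{YES}$ forces solvability and the subsequent search to terminate. Your treatment is in fact slightly more explicit than the paper's (you spell out the contrapositive argument and the degree hypothesis of Lemma~\ref{lem3}), but there is no substantive difference in strategy.
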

\begin{proof}
The implication {$(a) \Rightarrow (c)$} is obvious. We prove the implication
$(a) \Rightarrow (b)$. There is an algorithm $\textrm{Dioph}$ which takes as
input a positive integer $m$ and a non-empty system {$S \subseteq E_m$}, and
returns a Diophantine equation {$\textrm{Dioph}(m,S)$} which has the same
solutions in non-negative integers {$x_1,\ldots,x_m$}. Item~$(a)$ implies that
for each Diophantine equation $D$, if the algorithm~${\cal A}$ returns
\texttt{YES} for $D$, then $D$ has a solution in non-negative integers. Hence,
if the algorithm~${\cal A}$ returns \texttt{YES} for {$\mathrm{Dioph}(m,S)$},
then we can compute the smallest non-negative integer $i(m,S)$ such that
$\textrm{Dioph}(m,S)$ has a solution in non-negative integers not greater than
$i(m,S)$. If the algorithm~${\cal A}$ returns \texttt{NO} for
$\mathrm{Dioph}(m,S)$, then we set {$i(m,S)=0$}. The function
\[
{\mathbb N} \setminus \{0\} \ni m \to \textrm{max}\Bigl\{i(m,S)\colon\emptyset
\neq S \subseteq E_m\Bigr\} \in {\mathbb N}
\]
is computable and majorizes the function $f_{\textstyle \kappa}$. We prove the
implication $(b) \Rightarrow (a)$.
Let a function $h$ majorizes~$f_{\textstyle \kappa}$. By Lemma~\ref{lem3} for
$\K={\mathbb N}$, a Diophantine equation~$D$ is equivalent to a system
$S \subseteq E_n$. The algorithm~${\cal A}$ checks whether or not $S$ has a
solution in non-negative integers {$x_1,\ldots,x_n$} not greater than $h(n)$.
\begin{flushright}
$\qed$
\end{flushright}
\end{proof}

The implication {$(a) \Rightarrow (c)$} remains true with a weak formulation
of item~$(a)$, where the execution of~${\cal A}$ may not terminate or
${\cal A}$ may return nothing or something irrelevant, if $D$ has at least
$\kappa$ solutions in non-negative integers. The weakened item~$(a)$ implies
that Flowchart~2

\begin{figure}[H]
\centering
\begin{tikzpicture}[thick]
\ttfamily
\path (0,6.2) node{Start};
\path (0,5.4) node{Input a Diophantine equation $D (x_{1},\ldots,x_{n}) = 0$};
\path (0,4.6) node{$m := 0$};
\path (0,3.4) node{Execute $\mathcal{A}$ on the equation};
\path (0,3) node{$(m + y - (x_{1} + \ldots + x_{n}))^{2} + D^{2} (x_{1},\ldots,x_{n}) = 0$};
\path (0,2.2) node{$m := m + 1$};
\path (0,1.4) node{Print all tuples $(x_{1},\ldots,x_{n}) \in \mathbb{N}^{n}$ for which};
\path (0,1) node{max$(x_{1},\ldots,x_{n}) < m$ and $D(x_{1},\ldots,x_{n}) = 0$};
\path (0,.2) node{Stop};
\draw (0,6.2) ellipse (.75cm and 0.2cm);
\draw (-4.1,5.2) -- (-3.7,5.6) -- (4.1,5.6) -- (3.7,5.2) -- (-4.1,5.2);
\draw (-.55,4.4) rectangle (.55,4.8);
\draw (-2.9,2.8) rectangle (2.9,3.6);
\draw (-.8,2) rectangle (.8,2.4);
\draw (-4.2,.8) -- (-3.4,1.6) -- (3.8,1.6) -- (3.0,.8) -- (-4.2,.8);
\draw (0,.2) ellipse (.75cm and 0.2cm);
\draw (.8,2.2) -- (4,2.2) -- (4,4) -- (3.9,4); 
\draw[->] (0,2.8) -- (0,2.4) node[midway,right] {YES is returned};
\draw[->] (-2.5,2.8) -- (-2.5,1.6) node[above right] {NO is returned};
\draw[->] (0,6.0) -- (0,5.6);
\draw[->] (0,5.2) -- (0,4.8);
\draw[->] (0,4.4) -- (0,3.6);
\draw[->] (0,.8) -- (0,.4);
\draw[->] (3.9,4) -- (0,4);
\end{tikzpicture}\\[2ex]
\textbf{Flowchart~2:} An algorithm that conditionally finds all solutions to a Diophantine
equation which has less than $\kappa$ solutions in non-negative integers
\end{figure}
\noindent describes an algorithm whose execution terminates, if the set
\[
Sol(D):=\left\{(x_1,\ldots,x_n) \in {{\mathbb N}}^n\colon~D(x_1,\ldots,x_n)=0\right\}
\]
has less than $\kappa$ elements. If this condition holds, then the weakened
item~$(a)$ guarantees that the execution of Flowchart~2 prints all elements
of~{$Sol(D)$}. However, the weakened item~$(a)$ is equivalent to the original
one. Indeed, if the algorithm~${\cal A}$ satisfies the weakened item~$(a)$,
then Flowchart~3 illustrates a new algorithm~${\cal A}$ that satisfies the
original item~$(a)$.

\begin{figure}[H]
\centering
\begin{tikzpicture}[thick]
\ttfamily
\path (0,6.2) node{Start};
\path (0,5.4) node{Input a Diophantine equation $D$};
\path (0,4.6) node{$m := 1$};
\path (-1.2,3.4) node{Does $D$ have a solution in non-negative};
\path (-1.2,3) node{integers not greater than $m$?};
\path (-.75,2.2) node{Does the execution of $\mathcal{A}$};
\path (-.75,1.8) node{terminate after $m$ units of time?};
\path (3.75,2) node{$m := m + 1$};
\path (0,1) node{Print the output of $\mathcal{A}$};
\path (-3.75,.2) node{Print YES};
\path (0,.2) node{Stop};
\draw (0,6.2) ellipse (.75cm and 0.2cm);
\draw (-3.25,5.2) -- (-2.75,5.6) -- (3.25,5.6) -- (2.75,5.2) -- (-3.25,5.2);
\draw (-.55,4.4) rectangle (.55,4.8);
\draw (-4.75,2.8) rectangle (2.35,3.6);
\draw (-3.75,1.6) rectangle (2.25,2.4);
\draw (3,1.8) rectangle (4.5,2.2);
\draw (-2.5,.8) -- (-2,1.2) -- (2.5,1.2) -- (2,.8) -- (-2.5,.8);
\draw (-5,0) -- (-4.5,.4) -- (-2.5,.4) -- (-3,0) -- (-5,0);
\draw (0,.2) ellipse (.75cm and 0.2cm);
\draw (4.5,2) -- (5,2) -- (5,4) -- (4.65,4);
\draw[->] (4.75,4) -- (0,4);
\draw[->] (2.25,2) -- (3,2) node[midway,above] {No};
\draw[->] (0,6) -- (0,5.6);
\draw[->] (0,5.2) -- (0,4.8);
\draw[->] (0,4.4) -- (0,3.6);
\draw[->] (0,2.8) -- (0,2.4) node[midway,right] {No};
\draw[->] (0,1.6) -- (0,1.2) node[midway,right] {Yes};
\draw[->] (0,.8) -- (0,.4);
\draw[->] (-4.25,2.8) -- (-4.25,.4) node[above right] {Yes};
\draw[->] (-2.75,.2) -- (-.75,.2);
\end{tikzpicture}\\[2ex]
\textbf{Flowchart~3:} The weakened item $(a)$ implies the original one
\end{figure}
\par
Y. Matiyasevich in \cite{Matiyasevich3a} studies Diophantine equations
and Diophantine representations over ${\mathbb N} \setminus \{0\}$.
\begin{theorem}\label{the8} (\cite[p.~87]{Matiyasevich3a})
Suppose that there exists an effectively enumerable
set having no finite-fold Diophantine representation.
We claim that if a one-parameter Diophantine equation
\begin{equation}\label{eq4}
J(u,x_1,\ldots,x_m)=0
\end{equation}
for each value of the parameter $u$ has only finitely
many solutions in $x_1,\ldots,x_m$, then there exists
a number $n$ such that in every solution of~(\ref{eq4})
\[
x_1<u^n,\ldots,x_m<u^n
\]
\end{theorem}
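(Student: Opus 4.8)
The plan is to prove the contrapositive. Let $P$ abbreviate the conclusion: for every one-parameter Diophantine equation having, for each value of the parameter, only finitely many solutions, there is an $n$ with $x_1<u^n,\ldots,x_m<u^n$ in every solution. The hypothesis asserts the existence of a recursively enumerable set $W$ with no finite-fold Diophantine representation. I would establish the contrapositive: from a single finite-fold equation whose solutions are \emph{not} bounded by any fixed power of the parameter, I would manufacture a finite-fold Diophantine representation for $W$ (indeed for every recursively enumerable set), contradicting the hypothesis. Throughout I work over $\mathbb{N}\setminus\{0\}$, as in the cited source. As orientation, note that the converse implication is easy: if $P$ held and $W$ \emph{did} have a finite-fold representation $W(a)\Longleftrightarrow\exists x_1,\ldots,x_m\, K(a,x_1,\ldots,x_m)=0$, then $P$ applied to $K$ would confine every solution to the box $[1,a^n]^m$, so membership in $W$ could be decided by a finite search --- impossible for a non-computable $W$. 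Thus the stated direction is the substantive one.

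So assume a finite-fold equation $J(u,x_1,\ldots,x_m)=0$ violating $P$ is given, and define the binary relation
\[
R(a,b)\ \Longleftrightarrow\ \exists x_1,\ldots,x_m\ \bigl(J(a,x_1,\ldots,x_m)=0\ \wedge\ b=x_1+\cdots+x_m\bigr).
\]
Since $J(a,\cdot)=0$ has only finitely many solutions for each $a$, each pair $(a,b)$ has only finitely many witnessing tuples $(x_1,\ldots,x_m)$; hence $R$ is a finite-fold Diophantine relation. Because $J$ violates $P$, for every $n$ there are a value $a$ and a solution with $\max_i x_i\geq a^n$, whence some $(a,b)\in R$ satisfies $b\geq a^n$. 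Thus $R$ grows faster than every fixed power of its first argument while remaining finite-fold.

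The goal is now a finite-fold Diophantine definition of exponentiation: once the graph of $2^x$ (equivalently, of $x^y$) is available finite-foldly, Matiyasevich's proof of the Davis--Putnam--Robinson--Matiyasevich theorem can be executed finite-foldly, equipping every recursively enumerable set --- in particular $W$ --- with a finite-fold representation and yielding the desired contradiction. To produce exponentiation I would convert $R$ into a finite-fold relation of \emph{exponential growth} in Julia Robinson's sense (super-polynomial lower growth together with a governing upper bound such as $b<a^a$) and then run the Robinson--Matiyasevich construction of $2^x$ on top of it, verifying at each stage --- the Pell-equation input, the elimination of bounded quantifiers, the coding of sequences --- that only finitely many auxiliary witnesses arise, so that finite-fold-ness is preserved along the whole chain.

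The main obstacle is the upper bound. The relation $R$ inherits super-polynomial growth for free, but a finite-fold equation may have enormous --- even non-computably large --- solutions, so $R$ need not satisfy any inequality of the form $b<a^a$ and therefore need not be a Julia--Robinson relation as it stands. The heart of the argument is thus to \emph{tame} the growth: to carve out of $R$, by a re-parametrization or truncation that still assigns only finitely many witnesses to each pair, a derived finite-fold relation that remains super-polynomial yet is pinned below a controllable exponential. Arranging this taming so as to preserve finite-fold-ness, and then propagating finite-fold-ness through the entire exponentiation-to-DPRM construction, is where essentially all the difficulty resides; this is exactly the finite-fold machinery developed in the cited reference.
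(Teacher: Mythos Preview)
The paper does not contain a proof of this theorem at all: it is quoted from Matiyasevich's book and then immediately criticised. Specifically, the paper observes that the statement is \emph{false} for $u=1$ (take $J(u,x_1)=u+x_1-3$; then $x_1=2$ is the unique solution at $u=1$, yet $x_1<1^n$ is impossible for any $n$), notes that the theorem is absent from the Springer edition of the cited book, and records that the author ``has no opinion on the validity of Theorem~\ref{the8} for integers $u>1$, but is not convinced by the proof'' in the original reference. There is therefore no ``paper's own proof'' to compare against; the paper treats the result as doubtful.

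Your proposal does not escape these concerns. First, you never handle the $u=1$ pathology, so the argument as written purports to prove a false statement. Second, and more substantively, the entire weight of your argument rests on the ``taming'' step: from a finite-fold relation $R(a,b)$ with super-polynomial growth but no a~priori upper bound, you need to extract a finite-fold relation of genuine Julia--Robinson type (with $b<a^a$ or similar), and then propagate finite-fold-ness through the full DPRM construction of exponentiation. You explicitly identify this as ``where essentially all the difficulty resides'' and defer it to ``the finite-fold machinery developed in the cited reference'' --- but that is precisely the machinery the present paper declines to endorse. In short, your outline correctly isolates the crux, but leaves it unfilled; and since the paper itself regards that crux as unresolved, your proposal inherits the same gap rather than closing it.
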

\par
Theorem~\ref{the8} is false for $u=1$ when $J(u,x_1)=u+x_1-3$.
Theorem~\ref{the8} is missing in \cite{Matiyasevich3b}, the Springer edition of \cite{Matiyasevich3a}.
The author has no opinion on the validity of Theorem~\ref{the8} for integers $u>1$,
but is not convinced by the proof in \cite{Matiyasevich3a}.
Theorem~\ref{the8} restricted to integers $u>1$ and reformulated
for solutions in non-negative integers implies the following Corollary:\\[2ex]
{\bf Corollary.} {\em If there exists a recursively enumerable set having no
finite-fold Diophantine representation, then any set ${\cal M} \subseteq {\mathbb N}$
with a finite-fold Diophantine representation is computable.}
\newpage
Let us pose the following two questions:
\begin{question}\label{que1}
Is there an algorithm ${\cal B}$ which takes as input a Diophantine equation
$D$, returns an integer, and this integer is greater than the heights of
non-negative integer solutions, if the solution set has less than $\kappa$
elements? We allow a possibility that the execution of {${\cal B}$} does not
terminate or ${\cal B}$ returns nothing or something irrelevant, if $D$ has at
least $\kappa$ solutions in non-negative integers.
\end{question}

\begin{question}\label{que2}
Is there an algorithm ${\cal C}$ which takes as input a Diophantine equation
$D$, returns an integer, and this integer is greater than the number of
non-negative integer solutions, if the solution set is finite? We allow a
possibility that the execution of ${\cal C}$ does not terminate or ${\cal C}$
returns nothing or something irrelevant, if $D$ has infinitely many solutions
in non-negative integers.
\end{question}

Obviously, a positive answer to Question~\ref{que1} implies the weakened
item~$(a)$. Conversely, the weakened item~$(a)$ implies that Flowchart 4
describes an appropriate algorithm~${\cal B}$.

\begin{figure}[H]
\centering
\begin{tikzpicture}[thick]
\ttfamily
\path (0,5.0) node{Start};
\path (0,4.2) node{Input a Diophantine equation $D (x_{1},\ldots,x_{n}) = 0$};
\path (0,3.4) node{$m := 0$};
\path (0,2.2) node{Execute $\mathcal{A}$ on the equation};
\path (0,1.8) node{$(m + y - (x_{1} + \ldots + x_{n}))^{2} + D^{2} (x_{1},\ldots,x_{n}) = 0$};
\path (0,1) node{$m := m + 1$};
\path (-2.5,.2) node{Print $m$};
\path (0,.2) node{Stop};
\draw (0,5.0) ellipse (.75cm and 0.2cm);
\draw (-4,4) -- (-3.6,4.4) -- (4,4.4) -- (3.6,4) -- (-4,4);
\draw (-.55,3.2) rectangle (.55,3.6);
\draw (-2.9,1.6) rectangle (2.9,2.4);
\draw (-.8,.8) rectangle (.8,1.2);
\draw (-3.65,0) -- (-3.25,.4) -- (-1.25,.4) -- (-1.65,0) -- (-3.65,0);
\draw (0,.2) ellipse (.75cm and 0.2cm);
\draw (.8,1) -- (4,1) -- (4,2.8) -- (3.9,2.8);
\draw[->] (0,1.6) -- (0,1.2) node[midway,right] {YES is returned};
\draw[->] (-2.5,1.6) -- (-2.5,.4) node[above right] {NO is returned};
\draw[->] (0,4.8) -- (0,4.4);
\draw[->] (0,4) -- (0,3.6);
\draw[->] (0,3.2) -- (0,2.4);
\draw[->] (0,1.6) -- (0,1.2);
\draw[->] (4,2.8) -- (0,2.8);
\draw[->] (-1.45,.2) -- (-.75,.2);
\end{tikzpicture}\\[2ex]
\textbf{Flowchart~4:} The weakened item $(a)$ implies a positive answer to Question~\ref{que1}
\end{figure}
\begin{theorem}\label{the9}
(\cite{Tyszka4}) A positive answer to Question~\ref{que1} for $\kappa=\omega$
is equivalent to a positive answer to Question~\ref{que2}.
\end{theorem}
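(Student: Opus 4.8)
The plan is to prove the two implications separately, in each case exploiting the escape clauses of both Questions (which permit non-termination or irrelevant output when the solution set is too large), so that the constructed algorithm only needs to behave correctly on equations with finitely many solutions.

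First I would treat the easy direction: assume a positive answer to Question~\ref{que1} for $\kappa=\omega$, realized by an algorithm ${\cal B}$. Given a Diophantine equation $D(x_1,\ldots,x_n)=0$ whose solution set is finite, I run ${\cal B}$ on $D$ and obtain an integer $b$ which, by hypothesis, exceeds the height $\max(x_1,\ldots,x_n)$ of every non-negative integer solution. Hence every solution lies in the finite box $\{0,\ldots,b\}^n$. The algorithm ${\cal C}$ then exhaustively searches this box, counts the solutions it finds—this count $N$ is exact, since no solution escapes the box—and returns $N+1$. As $N$ equals the total number of non-negative integer solutions, the returned value $N+1$ is strictly greater than it, answering Question~\ref{que2}; when $D$ has infinitely many solutions the run of ${\cal B}$ may misbehave, which is allowed.

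The converse is the substantive direction. Assume a positive answer to Question~\ref{que2}, realized by ${\cal C}$, and let $D(x_1,\ldots,x_n)=0$ be given. The idea is to encode the height of a solution into a solution count by padding $D$ with a counter. I would form the Diophantine equation
\[
D'(x_1,\ldots,x_n,t,s):\qquad D^2(x_1,\ldots,x_n)+\bigl(t+s-(x_1+\cdots+x_n)\bigr)^2=0.
\]
A non-negative tuple solves $D'$ exactly when $(x_1,\ldots,x_n)$ solves $D$ and $t+s=x_1+\cdots+x_n$; for a fixed solution $\mathbf{x}$ of $D$ there are precisely $\bigl(\sum_i x_i\bigr)+1$ admissible pairs $(t,s)$. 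Consequently $D'$ has finitely many solutions if and only if $D$ does, and when these sets are finite the number of solutions of $D'$ equals $\sum_{\mathbf{x}}\bigl(\sum_i x_i+1\bigr)$, which is at least $H+1$, where $H$ denotes the largest height of any solution of $D$ (because the coordinate sum dominates the maximum coordinate). I therefore run ${\cal C}$ on $D'$ and return its output: whenever $D$ has finitely many solutions, that output exceeds the number of solutions of $D'$, hence exceeds $H$, supplying the height bound required by Question~\ref{que1}; whenever $D$ has infinitely many solutions so does $D'$, and ${\cal C}$ is permitted to misbehave. Note that this correspondence is what forces the restriction $\kappa=\omega$, since for a finite $\kappa$ the solution count of $D'$ would far exceed that of $D$ and the two thresholds would no longer align.

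The main obstacle is the design of the padding in the converse direction: the counter variables $t,s$ must be chosen so that (i) finiteness of the solution set is preserved in both directions, ensuring ${\cal C}$'s guarantee applies to $D'$ exactly when it should, and (ii) the resulting count provably grows at least as fast as the height. Tying $t+s$ to the sum $x_1+\cdots+x_n$ rather than to the harder-to-express maximum $\max(x_1,\ldots,x_n)$ achieves both, since a bound on the coordinate sum bounds each coordinate. The remaining verifications—that $D'$ is a single genuine polynomial equation and that the counting identity holds—are routine.
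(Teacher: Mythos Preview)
Your proof is correct and follows essentially the same approach as the paper: the converse direction uses the identical padded equation $D^2(x_1,\ldots,x_n)+\bigl(x_1+\cdots+x_n-y-z\bigr)^2=0$ (up to renaming $y,z$ as $t,s$ and a harmless sign flip inside the square), and the forward direction, which the paper dismisses in one word as ``trivially'', you spell out via the box search.
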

\begin{proof}
Trivially, a positive answer to Question~\ref{que1} for $\kappa=\omega$
implies a positive answer to Question~\ref{que2}.
Conversely, if a Diophantine equation {$D(x_1,\ldots,x_n)=0$} has only
finitely many solutions in non-negative integers, then the number of
non-negative integer solutions to the equation
\[
D^2\left(x_1,\ldots,x_n\right)+\left(x_1+\ldots+x_n-y-z\right)^2=0
\]
is finite and greater than $\textrm{max}(a_1,\ldots,a_n)$, where
$(a_1,\ldots,a_n) \in {{\mathbb N}}^n$ is any solution to
$D(x_1,\ldots,x_n)=0$.
\begin{flushright}
$\qed$
\end{flushright}
\end{proof}
\bibliography{references_expanded}
\end{document}